\newtheorem{theorem}{Theorem}[section]
\newtheorem{lemma}[theorem]{Lemma}
\theoremstyle{definition}
\numberwithin{equation}{section}
\DeclareMathOperator{\supp}{supp}
\begin{document}

\title[On Bass' conjecture of the small Davenport constant]{On Bass' conjecture of the small Davenport constant}

\author[G. Wang]{Guoqing Wang}
\address{School of Mathematical Sciences\\ Tiangong University\\
Tianjin 300387, P.R.
China}
\email{gqwang1979@aliyun.com}

\author[Y. Zhao]{Yang Zhao}
\address{School of Mathematical Sciences\\ Tiangong University\\
Tianjin 300387, P.R.
China}
\email{Zhyang202412@163.com}

\date{}

\begin{abstract}
Let $G$ be a finite group. The small Davenport constant $\mathsf d(G)$ of $G$ is the maximal integer $\ell$ such that there is a sequence of length $\ell$ over $G$ which has no nonempty product-one subsequence. In 2007, Bass conjectured that $\mathsf d(G_{m,n})=m+n-2$, where $G_{m,n}=\langle x, y| x^m=y^n=1, x^{-1}yx=y^s\rangle$, and $s$ has order $m$ modulo $n$. In this paper, we confirm the conjecture for any group $G_{m,n}$ with additional conditions that $s$ has order $m$ modulo $q$, for every prime divisor $q$ of $n$. Moreover, we solve the associated inverse problem characterizing the structure of any product-one free sequence with extremal length $\mathsf d(G_{m,n})$. Our results  generalize some obtained theorems on this problem.
\end{abstract}

\subjclass[2020]{Primary 11B75; Secondary 11P70}

\keywords{Bass' conjecture, small Davenport constant, Product-one sequence, Zero-sum, Metacyclic group}

\maketitle

\section{Introduction and main results}
Let $G$ be a finite group written multiplicatively with the operation $*$. The identity element $1_G$ of $G$ is denoted as $1$ for short, and the usage will not ambiguous in context.  Let $S=g_1\bm\cdot \ldots\bm\cdot g_{\ell}$ be a sequence over $G$ with length $\ell$. We use
$$\pi(S)=\{g_{\tau (1)}*\cdots *g_{\tau (\ell)}: \tau \mbox { is a permutation of } [1,\ell]\}\subseteq G$$
to denote the set of products of $S$. We say that $S$ is a {\sl product-one sequence} if $1\in\pi(S)$, and that $S$ is a {\sl minimal product-one sequence} if it is a product-one sequence which can not be partitioned into two nontrivial, product-one subsequences. The small Davenport constant of $G$, denoted $\mathsf d(G)$, is the maximal integer $\ell$ such that there is a sequence over $G$ of length $\ell$ which has no nonempty product-one subsequence. The large Davenport constant of $G$, denoted $\mathsf D(G)$, is the maximal length of all minimal product-one sequences over $G$. A simple argument \cite[Lemma 2.4]{GG2013} shows that $\mathsf d(G)+1\leq \mathsf D(G)\leq |G|$ with equality in the first bound when $G$ is abelian, and equality in the second bound when $G$ is cyclic. In 2022, Y. Qu, Y. Li and D. Teeuwsen \cite{QLT} proved that $\mathsf d(G)\leq |G|/p+p-2$ for any finite non-cyclic group $G$, where $p$ is the smallest prime divisor of $|G|$, which solved a conjecture of Gao, Li and Peng \cite{GLP2014}. Moreover, they \cite{QLT2} also proved that the equality holds if and only if $G$ is a non-cyclic group with a cyclic subgroup of index $p$.

Besides the study of the bounds for small Davenport constant in general finite groups, finding its precise values in some important specific types of non-abelian finite groups also attracted some researchers.
Recent research on the precise value of the small Davenport constant and the associated inverse problem was mainly focused on metacyclic groups, denoted $G_{m,n}$. In 2007, Bass \cite{Bass2007} conjectured that $\mathsf d(G_{m,n})=m+n-2$, where $G_{m,n}=\langle x, y| x^m=y^n=1, x^{-1}yx=y^s\rangle$, and $s$ has order $m$ modulo $n$. He proved the conjecture is true when $n$ is a prime. In 2019, F. Brochero Mart\'{\i}nez and S. Ribas \cite{BR2019} determined the form of all product-one free sequences $S$ over $G_{m,q}$ with length $|S|=m+q-2$, where $q$ is a prime. In 2023, Y. Qu and Y. Li \cite{QL} proved that $\mathsf d(G_{p,n})=p+n-2$ with condition $\gcd((s-1)p,n)=1$ for prime $p$, and furthermore, they \cite{QL2022}  determined the structure of all product-one free sequences $S$ over $G_{p,n}$ of length $|S|=p+n-2$. More related results can be found in \cite{ABR2023-2,BR2022,BLMR,HZ2019,YZF}.

Denote $G_{m,n}^\star=\langle x, y| x^m=y^n=1, x^{-1}yx=y^s\rangle$, where $s$ has order $m$ modulo $q$ for every prime divisor $q$ of $n$. In this paper, we prove that Bass' conjecture of the small Davenport constant is true for all the groups $G_{m,n}^\star$ and solve the associated inverse problem.

Our main results are as follows.

\begin{theorem}\label{d(Gmnstar)}  $\mathsf d(G_{m,n}^\star)=m+n-2$.
\end{theorem}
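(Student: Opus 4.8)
The plan is to prove the two inequalities $\mathsf d(G_{m,n}^\star)\ge m+n-2$ and $\mathsf d(G_{m,n}^\star)\le m+n-2$ separately. For the lower bound I would exhibit an explicit product-one free sequence of length $m+n-2$, namely the sequence $S_0$ consisting of $m-1$ copies of $x$ together with $n-1$ copies of $y$. Writing $\phi\colon G_{m,n}^\star\to\langle x\rangle\cong C_m$ for the projection with kernel $\langle y\rangle$, any nonempty subsequence $T\mid S_0$ has $\phi$-image $x^{a}$, where $a$ is the number of $x$'s in $T$, so $1\in\pi(T)$ forces $a\equiv 0\pmod m$, i.e.\ $T$ uses no $x$; then $\pi(T)=\{y^{b}\}$ with $b$ the number of $y$'s, and $y^b=1$ forces $b\equiv 0\pmod n$. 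Since $0\le a\le m-1$ and $0\le b\le n-1$, this forces $T$ to be empty, so $S_0$ is product-one free and the lower bound follows (this part needs no hypothesis on $s$).

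For the upper bound I would take a product-one free sequence $S$ and bound its two natural parts. Split $S=S_{(0)}\bm\cdot S_{(1)}$, where $S_{(0)}$ collects the terms lying in $\langle y\rangle\cong C_n$ and $S_{(1)}$ collects the terms whose $\phi$-image is nontrivial. Since $\mathsf d(C_n)=n-1$ and $S_{(0)}$ is product-one free over $C_n$, we get $|S_{(0)}|\le n-1$ at once, so the whole difficulty is to prove $|S_{(1)}|\le m-1$. The key structural observation is that if $T\mid S_{(1)}$ is a block whose $\phi$-image is product-one over $C_m$ (equivalently, the $x$-exponents of $T$ sum to $0$ modulo $m$), then every product in $\pi(T)$ lies in $\langle y\rangle$; moreover cyclic rotations of a fixed ordering of $T$ produce conjugate products, so $\pi(T)$ contains the whole $\langle s\rangle$-orbit $\{y^{c s^{j}}\}$ of any one of its values $y^{c}$, where $j$ ranges over the partial sums of the $x$-exponents.

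With this in hand the strategy is as follows. Assuming $|S_{(1)}|\ge m$, its $\phi$-image has length at least $m>\mathsf d(C_m)$, so a nonempty block $T\mid S_{(1)}$ with product-one $\phi$-image exists, and the previous paragraph supplies a large, $\langle s\rangle$-invariant set of attainable $y$-exponents for $T$. I would then combine this set with the subset sums coming from $S_{(0)}$ by a Cauchy--Davenport-type sumset argument in $\mathbb Z/n\mathbb Z$, working prime by prime through the Chinese Remainder Theorem: for each prime $q\mid n$ the hypothesis that $s$ has order $m$ modulo $q$ guarantees that the relevant orbit has full size $m$ modulo $q$, which is exactly what forces the combined product-set to contain $0$ modulo $q$ for every $q$, hence modulo $n$. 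Producing $0$ yields a nonempty product-one subsequence of $S$, contradicting product-one freeness; thus $|S_{(1)}|\le m-1$ and $|S|\le m+n-2$. Equivalently, one may run this as an induction on the number of prime factors of $n$, reducing along the quotient $G_{m,n}^\star/\langle y^{\,n/q}\rangle\cong G_{m,n/q}^\star$ with base case $n$ prime, which is Bass' theorem quoted above.

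I expect the main obstacle to be precisely the control of $\pi(T)$ and of its combination with $S_{(0)}$. Because the normal subgroup $\langle y^{\,n/q}\rangle\cong C_q$ is not central (as $s\not\equiv 1\pmod q$ whenever $m>1$), the ``product'' of a block is a set rather than a single element, and one must show this set is large enough to cover $0$ after summing. This is exactly the step in which the stronger hypothesis --- order $m$ modulo every prime $q\mid n$, rather than merely modulo $n$ --- becomes indispensable, and where the bulk of the combinatorial work will lie.
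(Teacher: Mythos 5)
Your lower bound is correct and is exactly the paper's construction ($S_0=x^{[m-1]}\bm\cdot y^{[n-1]}$). The upper bound, however, rests on a false intermediate claim. You split $S=S_{(0)}\bm\cdot S_{(1)}$ into the terms inside and outside $N=\langle y\rangle$ and assert that ``the whole difficulty is to prove $|S_{(1)}|\le m-1$.'' No proof of this can exist, because the statement is false: the two parts of a product-one free sequence cannot be bounded independently. Take $G=G_{2,n}^\star$ with $n\ge 3$ odd (so $s\equiv -1\pmod n$, a dihedral group) and $S=x\bm\cdot xy\bm\cdot xy^2$. Any product of two of its terms equals $(xy^i)*(xy^j)=y^{j-i}\neq 1$ for $i\neq j$, and any product of all three lies in the coset $xN$, so $S$ is product-one free; yet all three terms lie outside $N$, i.e.\ $|S_{(1)}|=3>m-1=1$. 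For $n=3$ this $S$ is even one of the extremal sequences listed in the paper's Theorem~\ref{inversed(G)}(b). The same example also defeats your proposed derivation of a contradiction: here $S_{(0)}$ is empty, the block $T=x\bm\cdot xy$ has $\pi(T)=\{y,y^2\}$, which is already $\langle s\rangle$-invariant, and there is nothing to add to it to reach $1$.

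The correct decomposition --- and the one the paper uses --- is dual to yours: choose $T\mid S$ such that $\varphi(T)$ is a product-one sequence over $G/N\cong C_m$ of \emph{maximal length}. Such a $T$ absorbs all of $S_{(0)}$, but also whole blocks of terms outside $N$ whose images multiply to $1$ (e.g.\ the pair $x\bm\cdot xy$ above). Then $\varphi(S\bm\cdot T^{[-1]})$ is product-one free over $C_m$, giving $|S\bm\cdot T^{[-1]}|\le m-1$ for free, and the entire burden shifts to proving $|T|\le n-1$. That is the paper's Lemma~\ref{PIS}: if $T$ is product-one free and $\varphi(T)$ is product-one, then $|\Pi(T)\cap N|\ge |T|$, hence $|T|\le |N|-1$. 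Its proof factors $T$ into \emph{several} blocks with minimal product-one image, uses precisely your rotation/orbit observation (the paper's Lemma~\ref{conjugationone}, where the hypothesis $\mathrm{ord}_q s=m$ for all $q\mid n$ enters through Lemma~\ref{basic}) to show each block $S_i$ satisfies $|\pi(S_i)|\ge |S_i|$, and then combines all blocks at once by a Kneser-type inequality (Lemma~\ref{productonefree}). Note that using a single block $T$ together with $S_{(0)}$, as you propose, cannot suffice: a single block contributes at most $m$ products, so when $S_{(1)}$ is long and $S_{(0)}$ is short the combined set need not have $n$ elements. Finally, your fallback induction along $G_{m,n}^\star/\langle y^{n/q}\rangle$ faces the separate obstruction that product-one freeness is not preserved when passing to quotients of $N$, so the base case (Bass' theorem for $n$ prime) cannot be lifted that way.
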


\begin{theorem}\label{inversed(G)} Let $S$ be a sequence over $G_{m,n}^\star$ with $|S|=\mathsf d(G_{m,n}^\star)=m+n-2$. Then $S$ is product-one free if and only if $S$ is of one of the following forms:
\begin{enumerate}
\item[(a)] If $G_{m,n}^\star\not\cong G_{2,3}$ then $$S=(x^ay^{b_1})\bm\cdot\ldots\bm\cdot (x^ay^{b_{m-1}})\bm\cdot (y^c)^{[n-1]},$$
where $a,c,b_1,\ldots,b_{m-1}\in \mathbb{N}$ with $\mbox{gcd}(a,m)=\mbox{gcd}(c,n)=1$;

\item[(b)] If $G_{m,n}^\star\cong G_{2,3}$ then $S=x\bm\cdot xy\bm\cdot xy^2$, or $S= xy^{b}\bm\cdot (y^{c})^{[2]}$ where $b\in [0,2]$ and $c\in[1,2]$.
\end{enumerate}
\end{theorem}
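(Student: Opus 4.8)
The plan is to transport the problem to the abelian quotient $\mathbb{Z}_m\cong G_{m,n}^\star/\langle y\rangle$, apply the two classical cyclic inverse theorems there, and then lift the conclusions back to $G_{m,n}^\star$, with the genuinely metacyclic input entering through one arithmetic consequence of the hypothesis on $s$. Write $H=\langle y\rangle\cong\mathbb{Z}_n$, which is normal of index $m$, and let $\phi\colon G_{m,n}^\star\to\mathbb{Z}_m$ be the projection $x^iy^j\mapsto i$. Since $\phi$ is a homomorphism onto an abelian group, for any subsequence $T\mid S$ the image $\phi(T)$ has a well-defined sum in $\mathbb{Z}_m$, and that sum is $0$ exactly when $\pi(T)\subseteq H$. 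The decisive consequence of the hypothesis ``$s$ has order $m$ modulo every prime $q\mid n$'' is that $\gcd(s^i-1,n)=1$ for all $1\le i\le m-1$: if $q\mid s^i-1$ then $s^i\equiv 1\pmod q$, contradicting that $s$ has order $m$ modulo $q$. Equivalently, for $i\not\equiv 0\pmod m$ the map $c\mapsto c(s^i-1)$ is a bijection of $\mathbb{Z}_n$, so each nontrivial coset $x^iH$ is a single conjugacy class and every commutator of a term of $x^iH$ with a power of $y$ is a unit multiple of that power, hence ranges over all of $H$. This unit property is what lets me move $y$-exponents around freely.

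For the sufficiency direction in (a) I would argue directly. Let $S=(x^ay^{b_1})\bm\cdot\ldots\bm\cdot(x^ay^{b_{m-1}})\bm\cdot(y^c)^{[n-1]}$ with $\gcd(a,m)=\gcd(c,n)=1$, and take a nonempty $T\mid S$ using $k$ of the terms $x^ay^{b_i}$. If $1\le k\le m-1$ then $\phi(T)$ sums to $ka$, and $\gcd(a,m)=1$ forces $ka\not\equiv 0$, so $\pi(T)\cap H=\varnothing$ and $1\notin\pi(T)$; if $k=0$ then $T\mid(y^c)^{[n-1]}$ has product $y^{cj}\ne 1$ for $1\le j\le n-1$ because $\gcd(c,n)=1$. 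Thus every such $S$ is product-one free, and the two exceptional sequences in (b) are confirmed by a direct inspection of $S_3\cong G_{2,3}$.

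For necessity I take $S$ product-one free with $|S|=m+n-2$ and split $S=S_0\bm\cdot S_1$, where $S_0$ collects the terms lying in $H$. Any product-one subsequence of $S_0$ is one of $S$, so $S_0$ is product-one free in $H\cong\mathbb{Z}_n$ and $|S_0|\le n-1$, i.e.\ $|S_1|\ge m-1$. The heart of the matter is the reverse bound, which I isolate as a \emph{Main Lemma}: if $|S_1|\ge m$ and $G_{m,n}^\star\not\cong G_{2,3}$, then $S$ has a nonempty product-one subsequence. Granting this, $|S_1|=m-1$ and $|S_0|=n-1$; then $S_0$ is a product-one free sequence of maximal length $n-1$ over $\mathbb{Z}_n$, so by the classical cyclic inverse theorem $S_0=(y^c)^{[n-1]}$ with $\gcd(c,n)=1$, while a second application of the Main Lemma shows $\phi(S_1)$ is zero-sum free of length $m-1$ over $\mathbb{Z}_m$, whence $\phi(S_1)=a^{[m-1]}$ with $\gcd(a,m)=1$, so all terms of $S_1$ share the $x$-exponent $a$. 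The exponents $b_1,\dots,b_{m-1}$ stay free, exactly as in (a), since sufficiency already showed every choice is admissible.

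I expect the Main Lemma to be the principal obstacle. Since $|S_1|\ge m>m-1=\mathsf d(\mathbb{Z}_m)$, the sequence $\phi(S_1)$ has a nonempty zero-sum subsequence, giving $T\mid S_1$ with $\pi(T)\subseteq H$; moreover pigeonhole forces some nonzero exponent $a\in[1,m-1]$ to occur at least twice in $S_1$, say in $x^ay^{b_1}$ and $x^ay^{b_2}$. Transposing two such equal-exponent terms inside a product changes its $y$-exponent by $(b_1-b_2)(s^a-1)s^{E}$, and a cyclic rotation of the product rescales the $y$-exponent by the unit $s^a$; using $\gcd(s^a-1,n)=1$ together with the rotations available in $S_0$, I intend to show that the attainable set of $y$-exponents is a full coset of $H$, hence contains $0$ and yields the identity. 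The delicate point, and the reason $G_{2,3}\cong S_3$ must be excluded, is that for $m=2,\ n=3$ the only zero-sum subsequences of $\phi(S_1)$ over $\mathbb{Z}_2$ have length $2$ and there may be no rotation to absorb, so these moves cannot sweep out $H$: concretely $x\bm\cdot xy\bm\cdot xy^2$ stays product-one free, which is precisely the extra family in (b). I therefore plan to prove the Main Lemma for all admissible $(m,n)\ne(2,3)$ and to dispatch $G_{2,3}$ by the finite check already indicated in (b).
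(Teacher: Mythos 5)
Your overall architecture is sound and in fact parallels the paper's: reduce to the quotient $G_{m,n}^\star/\langle y\rangle\cong C_m$, apply the cyclic inverse theorem twice, and isolate the hard content in a single statement — your ``Main Lemma'' — saying that a product-one free sequence of length $m+n-2$ cannot have $m$ or more terms outside $N=\langle y\rangle$ unless $G_{m,n}^\star\cong G_{2,3}$. The sufficiency direction and the finite check for $G_{2,3}$ are fine. But the Main Lemma is not a technical footnote; it is essentially the entire proof, and the method you sketch for it does not work. You propose that, given $T\mid S_1$ with $\phi(T)$ zero-sum, transpositions of equal-exponent terms and cyclic rotations make the attainable $y$-exponents ``sweep out a full coset of $H$.'' Those moves are exactly the moves that generate $\pi(T)$, and $\pi(T)$ is in general far smaller than a coset: for $T=(x^ay^{b})^{[m-1]}\bm\cdot(x^ay^{b'})$ one has $|\pi(T)|\le m$ (the products are the $m$ conjugates $u^{s^{j}}$ of a single element), while $|N|=n$ is unbounded. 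So no local-moves argument applied to one subsequence can produce the identity; one must aggregate contributions from all of $S$ and prove a counting bound of the shape $|\Pi(S)\cap N|\ge n$.

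That aggregation is where the paper's real machinery lives, none of which your plan anticipates: Kneser's addition theorem to handle stabilizers of product sets, the lemma that $1\notin\Pi(\mathbf{A})$ forces $|\Pi(\mathbf{A})|\ge\sum_i|A_i|$ for sequences of subsets, the conjugation lemmas giving $|\pi(T)M|\ge t|M|$ and $\pi(T)\{1,u,u^{s},\dots,u^{s^{m-1}}\}\subseteq\Pi(T\bm\cdot u)\cap N$, and the Gao--Zhuang lemma confining $\supp(\varphi(S))$ to $\{N,x^aN\}$. Worse, the hardest subcase of your Main Lemma is the one where \emph{all} $m+n-2$ terms lie outside $N$ — precisely the case that produces the exceptional sequence $x\bm\cdot xy\bm\cdot xy^2$ — and there the paper must show $|\Pi_m(T_0)|=m$ exactly for a block $T_0$ of length $2m-1$, decompose $T_0$ into square-free pieces, use Kneser together with the fact that $m<q$ for every prime $q\mid n$ to force $m=2$, $s\equiv-1\pmod n$ and $3\mid n$, and finally rule out $n>3$ by a disjoint-blocks argument. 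Your sketch does not engage with any of this, so the Main Lemma stands as a genuine gap, not a deferred verification. A smaller flaw: you invoke ``a second application of the Main Lemma'' to conclude $\phi(S_1)$ is zero-sum free when $|S_1|=m-1$, but your Main Lemma hypothesis requires $|S_1|\ge m$, so it says nothing in that situation; what you need instead is the easy observation that once $S_0=(y^c)^{[n-1]}$ is known, any $T\mid S_1$ with $\pi(T)\subseteq N\setminus\{1\}$ extends to a product-one subsequence of $S$ by adjoining a suitable number of copies of $y^c$. That repair is routine, but the Main Lemma itself is not.
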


We remark that the two classes of groups mentioned above ($G_{m,q}$ and $G_{p,n}$ with $\gcd((s-1)p,n)=1$ for primes $p,q$) are special cases of $G_{m,n}^\star$. Hence, Theorems \ref{d(Gmnstar)} and \ref{inversed(G)} generalize the previous results obtained by Bass \cite{Bass2007},  Mart\'{\i}nez and Ribas \cite{BR2019}, Qu and Li \cite{QL, QL2022}.

\section{Notation and Preliminaries}
We follow the notation and conventions detailed in \cite{GG2013}. For real numbers $a,b\in \mathbb{R}$, we set $[a,b]=\{x\in \mathbb{Z}: a\leq x\leq b\}$. For integers $m,n\in \mathbb{Z}$, we denote by $\mbox{gcd}(m,n)$ the greatest common divisor of $m$ and $n$. Let $a$ and $n$ be relatively prime integers with $a\neq 0$ and $n$ positive. Then the least positive integer $k$ such that $a^k\equiv 1 {\pmod n}$ is called the {\sl order of $a$ modulo} $n$ and is denoted by ${\rm ord}_n a$.

Let $G$ be a finite group written multiplicatively with operation $*$. If there exists no ambiguity, we shall refer to $ab$ as the product of $a*b$ for any elements $a,b\in G$.
For a nonempty subset $X$ of $G$, we denote by  $\langle A \rangle $  the subgroup of $G$ generated by $X$. For any two subsets $A, B$ of $G$, we denote by $AB$ the set $\{a*b:a\in A,b\in B\}$, in particular, if $A$ or $B$ is a singleton, say $B=\{b\}$, we write $Ab$ to mean $A\{b\}$. More generally, for any integer $n\geq 2$  and any $n$ subsets $A_1, \ldots, A_n$ of $G$,  we denote by $A_1\ldots A_n$  the set $\{a_1*\cdots *a_n: a_i\in A_i \mbox{ for each } i\in [1,n]\}$. By a {\sl sequence} over a group $G$, we mean a finite, unordered sequence where repetition of elements is allowed. We view sequences over $G$ as elements of the free abelian monoid $\mathcal{F}(G)$, denote multiplication in $\mathcal{F}(G)$ by the bold symbol $\bm\cdot$ which differs from the
notation $*$ used for the multiplication of the group $G$, and we use brackets for all exponentiation in $\mathcal{F}(G)$. Then a sequence $S \in \mathcal F(G)$ can be written in the form $S= g_1  \bm \cdot \ldots \bm\cdot g_{\ell},$ where $|S|= \ell$ is the {\it length} of $S$. For any $g \in G$, we denote $\mathsf v_g(S) = |\{ i\in [1, \ell] : g_i =g \}|\,  $  the {\it multiplicity} of $g$ in $S$. Let $\mathsf h(S)=\max\limits_{g\in G}\{\mathsf v_g(S)\}$ be the maximum of the multiplicities of $S$. A sequence $T \in \mathcal F(G)$ is called a {\it subsequence} of $S$, denoted by $T \mid S$, if  $\mathsf v_g(T) \le \mathsf v_g(S)$ for every $g\in G$. Denote by $S \bm\cdot T^ {[-1]}$  the subsequence of $S$ obtained by deleting the terms of $T$ from $S$. If $S_1, S_2 \in \mathcal F(G)$, then $S_1 \bm\cdot S_2 \in \mathcal F(G)$ denotes the sequence satisfying $\mathsf v_g(S_1 \bm\cdot S_2) = \mathsf v_g(S_1 ) + \mathsf v_g( S_2)$ for every $g \in G$. For convenience, we  write
\begin{center}
 $g^{[k]} = \underbrace{g \bm\cdot \ldots \bm\cdot g}_{k} \in \mathcal F(G)\quad$
\end{center}
for any element $g \in G$ and any integer $k\geq 0$. For a sequence $S= g_1 \bm \cdot \ldots \bm\cdot g_{\ell} \in \mathcal F(G)$, we let $$\pi (S) = \{g_{\tau(1)}*\cdots *g_{\tau(\ell)}: \tau \mbox{ is a permutation of } [1, \ell] \} \subseteq G$$ be the {\it set of products} of $S$, and let $$\Pi_n(S) = \bigcup _{T\mid S,\ |T| = n}\pi(T)$$ be the {\it set of all $n$-products} of $S$, and let $$\Pi(S) = \bigcup_{1 \le n \le \ell}\Pi_n(S)$$ be the {\it set of all subsequence products} of $S$. The sequence $S$ is called
\begin{itemize}
\item[$\bullet$]  {\it product-one} if $1 \in \pi(S)$;
\item[$\bullet$] {\it product-one free} if $1\not\in \Pi(S)$;
\item[$\bullet$] {\it minimal product-one } if $1\in\pi(S)$ and $S$ can not be partitioned into two nontrivial, product-one subsequences.
\end{itemize}
Let $\mathbf{A}=(A_1,\ldots, A_{\ell})$ be a sequence of finite subsets of $G$, i.e., $A_i\subseteq G$ for each $i\in [1,\ell]$. Denote
$$\Pi(\mathbf{A})=\{a_{i_1}*\cdots *a_{i_{k}}:1\leq i_1<\cdots<i_{k}\leq \ell, k\in[1, \ell] \mbox{ and } a_{i_j}\in A_{i_j} \mbox{ for } j\in[1, k]\}.$$
For any subset $A$ of $G$,  we denote $\mbox{Stab}_G(A)=\{g\in G: gA=A\}$ to be its stabilizer in $G$ (the subgroup of $G$ fixing the set $A$). If there is no ambiguity, we shall just write ${\rm Stab}(A)$ for ${\rm Stab}_G(A)$.

Then we shall close this section with some useful lemmas.
The following one is the well known Kneser's Addition Theorem, and we direct the interested reader to \cite{GeK, Na} for a detailed proof.

\begin{lemma}\label{Kneser}\cite{GeK, Na} (Kneser's Addition Theorem) Let $A_1, \ldots, A_r$ be finite nonempty subsets of an abelian group $G$, and let $H=\mbox{\rm Stab}(A_1\ldots A_r)$. Then,
$$
|A_1\ldots A_r|\geq |A_1H|+\cdots +|A_rH|-(r-1)|H|.
$$
\end{lemma}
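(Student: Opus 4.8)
Since $G$ is abelian, I write it additively, denoting the sumset by $A_1+\cdots+A_r$, the identity by $0$, and $\mathrm{Stab}(X)=\{g\in G: g+X=X\}$, so that $X$ is always a union of cosets of $\mathrm{Stab}(X)$. The engine of the whole argument is the \emph{Dyson $e$-transform}: for $e\in G$ set
\[
A_e=A\cup(B+e),\qquad B_e=(A-e)\cap B .
\]
A direct count gives the two properties on which everything rests, namely $A_e+B_e\subseteq A+B$ and $|A_e|+|B_e|=|A|+|B|$. I would run the proof by induction on $|G|$ (the groups here are finite), with an inner induction on $|B|$, first disposing of the multi-summand case and the nontrivial-stabilizer case so as to isolate a single hard core.

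\emph{Reductions.} For the passage from general $r$ to $r=2$ I would induct on $r$, applying the two-summand bound to $A_1+(A_2+\cdots+A_r)$ and reconciling $\mathrm{Stab}(A_1+\cdots+A_r)$ with the stabilizers of the partial sums. For the two-summand statement itself it suffices to treat the case $\mathrm{Stab}(A+B)=\{0\}$ and prove the clean inequality $|A+B|\ge|A|+|B|-1$. Indeed, writing $H=\mathrm{Stab}(A+B)$ and replacing $A,B$ by the $H$-periodic sets $A+H,B+H$ (which leaves $A+B$ and $H$ unchanged and only enlarges $|A|,|B|$), one may pass to the quotient $\bar G=G/H$, where $\bar A+\bar B=\overline{A+B}$ has trivial stabilizer and $|\bar A|=|A+H|/|H|$, $|\bar B|=|B+H|/|H|$, $|\bar A+\bar B|=|A+B|/|H|$; clearing denominators turns $|\bar A+\bar B|\ge|\bar A|+|\bar B|-1$ into exactly $|A+B|\ge|A+H|+|B+H|-|H|$. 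When $H\neq\{0\}$ the quotient is a strictly smaller group, so this step is covered by the induction on $|G|$.

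\emph{Core case $\mathrm{Stab}(A+B)=\{0\}$.} Here I would induct on $|B|$. The base case $|B|=1$ is immediate. For $|B|\ge 2$, translate so that $0\in A\cap B$ and examine the transforms at the points $e=a_0-b_0$ with $a_0\in A$, $b_0\in B$; one checks $b_0\in B_e$, so $B_e\neq\emptyset$. If every such transform were \emph{rigid}, i.e.\ $B_e=B$ for all $e=a_0-b_0$, then $B-b_0\subseteq A-a_0$ for all $a_0,b_0$, whence $(B-b_0)+A=A$ and the subgroup $\langle B-b_0\rangle$ would lie in $\mathrm{Stab}(A)=\mathrm{Stab}(A+B)$; since $|B|\ge2$ this subgroup is nontrivial, contradicting $\mathrm{Stab}(A+B)=\{0\}$. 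Hence some $e$ gives $\emptyset\neq B_e\subsetneq B$, so $|B_e|<|B|$ while $|A_e|+|B_e|=|A|+|B|$ and $A_e+B_e\subseteq A+B$. Applying the inductive hypothesis to $(A_e,B_e)$, whose second set is strictly smaller, yields
\[
|A_e+B_e|\ge|A_e|+|B_e|-|K|=|A|+|B|-|K|,\qquad K=\mathrm{Stab}(A_e+B_e).
\]

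\emph{The main obstacle.} If $K=\{0\}$ the last display already gives $|A+B|\ge|A_e+B_e|\ge|A|+|B|-1$, finishing the core case. The genuine difficulty is the possibility $K\neq\{0\}$: although $A+B$ has trivial stabilizer, the smaller sumset $A_e+B_e$ can acquire periodicity, and then the bound is too weak by $|K|-1$. To recover this deficit I would exploit that $A_e+B_e$ is a union of $K$-cosets sitting inside $A+B$: under $\pi\colon G\to G/K$, the $|A_e+B_e|/|K|$ cosets making up $A_e+B_e$ are fully contained in $A+B$, while the triviality of $\mathrm{Stab}(A+B)$ forces $A+B$ to meet further $K$-cosets only partially. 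Applying the theorem in the strictly smaller group $G/K$ to $\pi(A),\pi(B)$ (again by the induction on $|G|$) and then counting the contributions of the fully- and partially-covered cosets should upgrade the estimate to the required $|A+B|\ge|A|+|B|-1$. This coset-counting descent through the quotient $G/K$ is the technical heart of Kneser's theorem and the step I expect to demand the most care.
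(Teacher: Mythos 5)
First, note that the paper does not prove this lemma at all: it is imported verbatim from the literature (\cite{GeK, Na}), so your proposal can only be measured against the classical proof of Kneser's theorem, which is indeed the Dyson-transform argument you outline. Your skeleton is faithful to it: the transform identities $A_e+B_e\subseteq A+B$ and $|A_e|+|B_e|=|A|+|B|$ are correct, the reduction modulo $H=\mathrm{Stab}(A+B)$ to the trivial-stabilizer inequality $|A+B|\ge |A|+|B|-1$ is correct, and your rigidity argument (all $B_e=B$ forces $\langle B-b_0\rangle\subseteq \mathrm{Stab}(A)$) is sound, except that you should write $\mathrm{Stab}(A)\subseteq\mathrm{Stab}(A+B)$ rather than equality --- the containment is all you need and all that is true. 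The multi-summand reduction you gesture at also goes through, but the ``reconciling'' you mention has a specific content you should make explicit: after quotienting by $H$ the total sum has trivial stabilizer, and since the stabilizer of any partial sum is contained in the stabilizer of the full sum, all partial-sum stabilizers are then automatically trivial, so the two-summand inequality chains without loss. Two smaller caveats: your outer induction on $|G|$ proves the lemma only for finite $G$ (adequate for this paper, where it is applied inside $N\cong C_n$, but not for the lemma as stated over an arbitrary abelian group), and the statement of Kneser's theorem you feed into the inner induction must be the full stabilizer form, not just the trivial-stabilizer corollary, since $K=\mathrm{Stab}(A_e+B_e)$ need not be trivial.

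The genuine gap is exactly where you flag it, and it is larger than your sketch suggests. In the case $K\neq\{0\}$, the inductive bound gives $|A+B|\ge|A_e+B_e|\ge|A|+|B|-|K|$, and the only cheap improvement available is this: if $A+B=A_e+B_e$ then $\mathrm{Stab}(A+B)\supseteq K\neq\{0\}$, a contradiction, so the containment is strict and $|A+B|\ge|A|+|B|-|K|+1$. That settles $|K|=2$ but leaves a deficit of $|K|-2$ in general, and your proposed remedy --- apply the theorem to $\pi(A),\pi(B)$ in $G/K$ and count fully versus partially covered $K$-cosets --- does not close it as stated: nothing in your setup guarantees that the partially filled cosets contribute the missing $|K|-2$ elements (strict containment alone yields a single extra element). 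The classical completions all require additional structure you have not set up: one chooses $e$ extremally (say, minimizing $|B_e|$ among transforms with $B_e\neq\emptyset$), and then a delicate analysis of the elements of $A+B$ lying in $K$-cosets not contained in $A_e+B_e$ recovers the deficit; this occupies roughly half of the proof in Nathanson's book \cite{Na} and in \cite{GeK}. So your proposal is a correct and well-organized reconstruction of the standard reductions, but the core case is a plan rather than a proof, and the naive form of that plan would fail for $|K|\ge 3$.
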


\begin{lemma}\cite[Lemma 2.8]{QL2022}\label{productonefree}
Let $\mathbf{A}=(A_1,A_2,\ldots,A_{\ell})$ be a sequence of subsets of a group $G$. If $1\notin \Pi(\mathbf{A})$, then $|\Pi(\mathbf{A})|\geq \sum_{i=1}^{\ell} |A_i|$.
\end{lemma}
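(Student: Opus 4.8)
The plan is to induct on $\ell$, building the product set up one subset at a time and controlling its growth at each step. For $\mathbf{A}=(A_1,\ldots,A_\ell)$ write $P_0=\{1\}$ and $P_i=\{1\}\cup\Pi((A_1,\ldots,A_i))$ for $i\in[1,\ell]$, so that $P_0\subseteq P_1\subseteq\cdots\subseteq P_\ell$. Sorting subsequence products according to whether or not they use a term from $A_i$ (which, having the largest index, must occur last in any product) gives the recursion $P_i=P_{i-1}\cup P_{i-1}A_i$. Since $1\notin\Pi(\mathbf{A})$ by hypothesis, every element of $P_{i-1}A_i$ lies in $\Pi(\mathbf{A})$, whence $1\notin P_{i-1}A_i$ for each $i$; also $|P_\ell|=|\Pi(\mathbf{A})|+1$. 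Thus it suffices to prove the increment bound $|P_i|\ge|P_{i-1}|+|A_i|$ for every $i$, since summing over $i\in[1,\ell]$ and using $|P_0|=1$ gives $|\Pi(\mathbf{A})|+1=|P_\ell|\ge 1+\sum_{i=1}^\ell|A_i|$.

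Writing $Q=P_{i-1}$ and $B=A_i$, the increment bound becomes the self-contained claim: if $Q,B\subseteq G$ with $1\in Q$ and $1\notin QB$, then $|Q\cup QB|\ge|Q|+|B|$, equivalently $|QB\setminus Q|\ge|B|$. First I would record two easy consequences of the hypotheses. For each $b\in B$ one has $Qb\ne Q$, since $Qb=Q$ together with $1\in Q$ would force $\langle b\rangle\subseteq Q$, hence $b^{-1}\in Q$ and $1=b^{-1}b\in QB$, a contradiction; in particular each set $Qb\setminus Q$ is nonempty. Likewise $B\cap Q^{-1}=\emptyset$, again directly from $1\notin QB$. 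With these in hand the case $Q\cap B=\emptyset$ is immediate, because $1\in Q$ gives $B\subseteq QB$, so $Q\cup QB\supseteq Q\cup B$ and the latter union is disjoint of size $|Q|+|B|$.

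The crux, and the step I expect to be the main obstacle, is the remaining case $Q\cap B\ne\emptyset$, where one must exhibit $|B|$ \emph{distinct} elements of $QB\setminus Q$. The natural device is a system of distinct representatives, choosing for each $b\in B$ some $q_b\in Q$ with $q_bb\notin Q$; the entire difficulty is injectivity, since in a nonabelian group the translates $Qb$ overlap in ways that Kneser's Theorem (Lemma \ref{Kneser}), valid only in the abelian setting, would otherwise regulate. I would control this by decomposing $G$ into left cosets of $\langle b\rangle$: right multiplication by $b$ permutes each such coset cyclically, so that $|Qb\setminus Q|$ counts the ``arcs'' of $Q$ within these cosets, reducing the per-$b$ estimate to the Cauchy--Davenport phenomenon in cyclic groups. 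The delicate part is then to assemble these per-$b$ contributions across all of $B$ into globally distinct representatives; this combinatorial bookkeeping is the heart of the matter, and is precisely the content of the cited \cite[Lemma 2.8]{QL2022}. Once the increment bound is secured, the surrounding induction on $\ell$ is routine.
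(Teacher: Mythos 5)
Your outer scaffolding is correct: the recursion $P_i=P_{i-1}\cup P_{i-1}A_i$ does hold (products in $\Pi(\mathbf{A})$ respect the index order, so a term of $A_i$ can only occur last), $1\notin P_{i-1}A_i$ follows from $P_{i-1}A_i\subseteq\Pi(\mathbf{A})$, and the lemma would indeed follow from the increment bound that $1\in Q$ and $1\notin QB$ imply $|Q\cup QB|\geq |Q|+|B|$; your disposal of the case $Q\cap B=\emptyset$ is also fine. The problem is that in the remaining case --- the crux --- you do not give a proof, and the route you sketch cannot work as described. The system-of-distinct-representatives plan is circular: by Hall's marriage theorem, an SDR for the family $\{Qb\setminus Q\}_{b\in B}$ exists if and only if $\left|\bigcup_{b\in B'}(Qb\setminus Q)\right|=|QB'\setminus Q|\geq |B'|$ for \emph{every} $B'\subseteq B$, including $B'=B$ --- and that is precisely the inequality you are trying to establish. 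The arc-counting inside left cosets of $\langle b\rangle$ only yields the per-element bound $|Qb\setminus Q|\geq 1$ (the coset of $1$ cannot lie wholly in $Q$, else $b^{-1}\in Q$ and $1\in QB$), and no assembly of these one-at-a-time facts produces $|B|$ distinct representatives without the Hall condition already in hand. Your own text concedes the point: you write that the remaining bookkeeping ``is precisely the content of the cited \cite[Lemma 2.8]{QL2022},'' i.e., you defer the heart of the argument to the very statement being proven. (Note that the paper gives no proof either --- Lemma \ref{productonefree} is imported from \cite{QL2022} --- so nothing in the paper fills this hole for you.)

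The missing step has a name: your increment bound is exactly the Kemperman--Scherk theorem, which, unlike Kneser's theorem (Lemma \ref{Kneser}), is valid in arbitrary, possibly non-abelian, groups: if some element of a product set $AB$ has exactly $r$ representations $a*b$, then $|AB|\geq |A|+|B|-r$. Apply it to $A=Q$ and $B'=\{1\}\cup B$ (note $1\notin B$, since $1\in B$ would give $1\in QB$): then $QB'=Q\cup QB$, and the element $1\in QB'$ has the unique representation $1=1*1$, because $1=q*b$ with $b\in B$ would contradict $1\notin QB$. Hence $|Q\cup QB|\geq |Q|+|B'|-1=|Q|+|B|$, which is the increment bound, with no case distinction on $Q\cap B$ needed. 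The proof of Kemperman--Scherk is itself a genuine induction with a transform-type argument, not coset-by-coset bookkeeping, so to complete your write-up you must either quote that theorem or reproduce its proof; as it stands, the decisive step of your argument is absent.
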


\begin{lemma}\cite[Theorem 1.2]{GZ2006}\label{normalsequence}
Let $G$ be a finite abelian group, and let $S$ be a sequence over $G$. Let $T$ be a product-one subsequence of $S$ with maximal length. If $|T|=|S|-\mathsf d(G)$, then $\mathsf v_g(S\bm\cdot T^{[-1]})\geq 1$ for any $g\in G\setminus \{1\}$ such that $\mathsf v_g(S)\geq 1$.
\end{lemma}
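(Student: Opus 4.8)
The plan is to work in the product-one language of the statement, writing $W=S\bm\cdot T^{[-1]}$ for the complementary subsequence, so that $S=T\bm\cdot W$. Since $T$ is a product-one subsequence of maximal length, $W$ must itself be product-one free: any nonempty product-one subsequence of $W$ could be adjoined to $T$ to produce a longer product-one subsequence of $S$, contradicting maximality. Combined with the hypothesis $|T|=|S|-\mathsf d(G)$, this gives $|W|=\mathsf d(G)$, so $W$ is a product-one free sequence of the largest possible length. The whole argument then rests on a completeness fact that I would isolate first: $\Pi(W)=G\setminus\{1\}$.

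To prove completeness, fix any $h\in G\setminus\{1\}$ and consider $W\bm\cdot h$. Its length is $\mathsf d(G)+1$, so by the very definition of $\mathsf d(G)$ it is not product-one free, and hence it has a nonempty product-one subsequence. Because $W$ itself is product-one free, this subsequence must use the adjoined term $h$, so it has the form $h\bm\cdot R$ with $R\mid W$; as $G$ is abelian the product-one condition forces the product of $R$ to be $h^{-1}$, that is $h^{-1}\in\Pi(W)$ (here $R$ is nonempty, since $h\neq 1$). Letting $h$ range over $G\setminus\{1\}$, and therefore $h^{-1}$ over $G\setminus\{1\}$, yields $G\setminus\{1\}\subseteq\Pi(W)$, while the reverse inclusion is immediate from product-one freeness.

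With completeness in hand I would finish by an exchange argument, run contrapositively. Suppose some $g\in G\setminus\{1\}$ satisfies $\mathsf v_g(S)\geq 1$ but $\mathsf v_g(W)=0$; then $\mathsf v_g(T)=\mathsf v_g(S)\geq 1$, so $g$ occurs in $T$. By completeness $g\in\Pi(W)$, so there is $B\mid W$ whose product equals $g$, and crucially $|B|\geq 2$, because $g\notin\supp(W)$ rules out $B$ being a single term. Now replace one copy of $g$ in $T$ by $B$: the sequence $(T\bm\cdot g^{[-1]})\bm\cdot B$ is a subsequence of $S=T\bm\cdot W$ (using that $T$ and $W$ are complementary and $B\mid W$), and since $G$ is abelian its product is $1\cdot g^{-1}\cdot g=1$, so it is product-one, while its length $|T|-1+|B|\geq|T|+1$ strictly exceeds $|T|$. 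This contradicts the maximality of $T$, so no such $g$ exists, which is exactly the assertion.

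The steps requiring the most care, and where I expect the main obstacle, are the two places where the maximality hypotheses are converted into usable information: first, extracting from $|T|=|S|-\mathsf d(G)$ together with maximality of $T$ that $W$ is product-one free of extremal length; and second, the completeness step $\Pi(W)=G\setminus\{1\}$, whose proof hinges on the observation that adjoining any nonidentity element to an extremal product-one free sequence necessarily creates a product-one subsequence through the new term. Once these are in place the exchange is routine, the only subtlety being to record that $g\notin\supp(W)$ forces the representing subsequence $B$ to have length at least two, which is precisely what makes the exchanged product-one subsequence strictly longer than $T$.
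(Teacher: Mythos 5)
Your proposal is correct, but there is nothing in the paper itself to compare it against: the paper does not prove this lemma, it imports it as a black box from Gao--Zhuang \cite[Theorem 1.2]{GZ2006} (stated there in additive, zero-sum language). Your argument is a complete, self-contained proof and every step checks out. Maximality of $T$ does force $W=S\bm\cdot T^{[-1]}$ to be product-one free (a nonempty product-one $R\mid W$ would concatenate with $T$ into a longer product-one subsequence of $S$), and the hypothesis $|T|=|S|-\mathsf d(G)$ then makes $W$ extremal of length $\mathsf d(G)$. Your completeness step $\Pi(W)=G\setminus\{1\}$ is exactly the right key fact: adjoining any $h\neq 1$ to $W$ produces a sequence of length $\mathsf d(G)+1$, whose forced product-one subsequence must pass through $h$, and commutativity pins the product of the remaining part down to $h^{-1}$, with nonemptiness guaranteed by $h\neq 1$. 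The concluding exchange $(T\bm\cdot g^{[-1]})\bm\cdot B$, where $B\mid W$ has product $g$ and $g\notin\supp(W)$ forces $|B|\geq 2$, strictly lengthens $T$ and gives the contradiction. This is, in substance, the standard argument for this classical fact about extremal zero-sum free sequences, so your proof honestly fills in what the paper delegates to a citation.
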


\begin{lemma}\cite[Theorem 2.1(a)]{GGS2007}\label{n-1}
Let $G=\langle g\rangle$ be a finite cyclic group of order $n\geq 3$ generated by the element $g$, and let $S$ be a product-one free sequence over $G$ of length $|S|=n-1$. Then $S=(g^k)^{[n-1]}$ for some integer $k\in [1,n-1]$ such that $\gcd(k, n)=1$.
\end{lemma}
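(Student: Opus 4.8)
The plan is to translate everything into additive language and then reduce the lemma to a single rigidity assertion, which I would establish by induction on $n$. Since $G=\langle g\rangle$ is cyclic and hence abelian, I would write $G$ additively as $\mathbb{Z}/n\mathbb{Z}$, so that the identity becomes $0$, the product-one condition becomes the zero-sum condition, and for any subsequence $T\mid S$ the single product $\pi(T)$ becomes the sum of the terms of $T$. Write $\Sigma(S)=\Pi(S)$ for the resulting set of all subsequence sums. The first task is to pin down $\Sigma(S)$ exactly. Applying Lemma~\ref{productonefree} to the sequence of singletons $(\{g_1\},\ldots,\{g_{n-1}\})$ gives $|\Sigma(S)|\ge|S|=n-1$, while product-one freeness gives $0\notin\Sigma(S)$, so $\Sigma(S)\subseteq G\setminus\{0\}$ and $|\Sigma(S)|\le n-1$. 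Hence $\Sigma(S)=G\setminus\{0\}$. A quick consequence is that $\supp(S)$ generates $G$: if the greatest common divisor of $n$ together with all terms of $S$ were some $e>1$, then every term, and hence all of $S$, would lie in the subgroup of order $n/e$, forcing $|S|\le\mathsf d(C_{n/e})=n/e-1<n-1$, a contradiction.

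Next I would reduce the entire statement to the assertion that $S$ has constant support, i.e.\ $S=h^{[\,n-1\,]}$ for some $h\in G\setminus\{0\}$. Indeed, for a constant sequence $S=h^{[\,n-1\,]}$ one has $\Sigma(S)=\{h,2h,\ldots,(n-1)h\}$, which avoids $0$ precisely when $h$ has order $n$, i.e.\ when $h=g^k$ with $\gcd(k,n)=1$; so once constancy is known the generator condition in the conclusion is automatic from product-one freeness. Thus the whole content is the rigidity claim: \emph{a zero-sum free sequence of length $n-1$ over $C_n$ must be constant}.

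For the rigidity claim I would induct on $n$. The base case $n=p$ prime is classical: after normalizing one term to $1$ by a group automorphism, the sums of the subsequences built from $i$ copies of $1$ together with $j$ copies of a second value $b$ sweep out long intervals of residues, and zero-sum freeness forbids these intervals from covering $0$; this ``index'' constraint (equivalently, the Cauchy--Davenport theorem applied to the sets $A_i=\{0,g_i\}$, whose sumset is all of $\mathbb{Z}/p\mathbb{Z}$) can only be satisfied when every term is equal. For the inductive step, fix a prime $q\mid n$ and the quotient $\psi\colon C_n\twoheadrightarrow C_q$ with kernel $H\cong C_{n/q}$. I would greedily extract from $\psi(S)$ minimal zero-sum subsequences over $C_q$ (possible since $\mathsf d(C_q)=q-1$), giving a factorization $S=S_0\bm\cdot S_1\bm\cdot\ldots\bm\cdot S_k$ in which each $S_i$ ($i\ge1$) has $\mathrm{sum}(S_i)\in H$ and the remainder $S_0$ maps to a zero-sum free sequence over $C_q$. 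The induced sequence $\mathrm{sum}(S_1)\bm\cdot\ldots\bm\cdot\mathrm{sum}(S_k)$ over $H\cong C_{n/q}$ is zero-sum free because $S$ is, so $k\le\mathsf d(C_{n/q})=n/q-1$; a matching lower bound on $k$ from the length count $|S|=n-1$ forces $k=n/q-1$, so this induced sequence is extremal and the induction hypothesis makes it constant. Applying the induction hypothesis once more to the maximal minimal zero-sum blocks over $C_q$ (whose inverse structure is itself constant, and follows from the lemma for $C_q$ by deleting one term) and reassembling should then force $S$ to be constant.

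The main obstacle is precisely this equality/rigidity bookkeeping in the inductive step. One must show that there is no ``slack'' distributed between the base $H$ and the fibers over $C_q$: the block lengths are forced to the extremal value $q$, the remainder $S_0$ is forced into a rigid shape, and the common value in $H$ and the common values in each $C_q$-block must be compatible, so that the reassembled sequence is genuinely constant rather than merely fibered by constants. Kneser's theorem (Lemma~\ref{Kneser}), applied to the relevant partial sumsets, is the natural tool for controlling these sumset sizes and their stabilizers, and the degenerate situations where a stabilizer is nontrivial are exactly where the delicate casework lies; the tightness of the argument at the smallest orders (the hypothesis $n\ge 3$) is a further indication of where care is needed.
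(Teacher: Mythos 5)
Your argument stops exactly where the content of this lemma begins. The constraints you derive in the inductive step --- each block $S_i$ maps under $\psi$ to a constant sequence $c_i^{[q]}$ with $c_i$ a generator of $C_q$, each block sum lies in $H$ and equals one fixed generator $h$ of $H$, and $\psi(S_0)=c_0^{[q-1]}$ --- genuinely do \emph{not} force $S$ to be constant, and the ``reassembling'' you defer is not bookkeeping but the whole problem. Concretely, take $n=9$, $q=3$, $H=\{0,3,6\}$, and the blocks $S_1=S_2=1\bm\cdot 4\bm\cdot 7$, $S_0=1\bm\cdot 1$ (written additively in $\mathbb{Z}/9\mathbb{Z}$). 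Then $\psi(S_1)=\psi(S_2)=1^{[3]}$ is a minimal zero-sum sequence of maximal length over $C_3$, each block sums to $3$, so the induced sequence $3\bm\cdot 3$ over $H\cong C_3$ is zero-sum free, constant, with generator value, and $\psi(S_0)=1^{[2]}$ is zero-sum free of length $q-1$: every condition you establish holds. Yet the assembled sequence $1^{[4]}\bm\cdot 4^{[2]}\bm\cdot 7^{[2]}$ is not constant --- and is not even zero-sum free, since $1+1+7\equiv 0\pmod 9$. So ``fibered by constants'' must be ruled out by a further argument (for instance, exchanging terms between blocks and reapplying the rigidity of the perturbed factorization), and in addition the cases your induction hypothesis cannot reach need separate treatment: $n=2q$ (where $k=1$, so the $H$-sequence has length $1$ and constancy says nothing), and $q=2$ or $n/q=2$, which fall outside the stated range $n\ge 3$.

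The base case as justified is also not a proof. Cauchy--Davenport applied to the sets $A_i=\{0,g_i\}$ gives $A_1+\cdots+A_{p-1}=\mathbb{Z}/p\mathbb{Z}$, but this sumset always contains $0$ via the all-zero selection, so it contradicts nothing and yields no rigidity; it merely re-proves that every nonzero residue is a subsequence sum, which you already knew. The prime case needs a genuine equality-case (Vosper-type) or direct inverse argument. Note also that the paper does not prove this lemma at all --- it quotes it from \cite{GGS2007} --- and that there is a short classical argument covering all $n\geq 2$ at once, which makes the entire induction unnecessary: for any ordering $g_1,\ldots,g_{n-1}$ of the terms of $S$, the $n$ partial sums $0,s_1,\ldots,s_{n-1}$ are pairwise distinct (a coincidence $s_i=s_j$ with $i<j$ produces a zero-sum subsequence), hence they exhaust $\mathbb{Z}/n\mathbb{Z}$; if $S$ had two distinct terms $a\neq b$, comparing the ordering beginning $a,b,\ldots$ with the one beginning $b,a,\ldots$ (same tail) shows two sets of partial sums that agree in $n-1$ elements and both equal $\mathbb{Z}/n\mathbb{Z}$, forcing $a=b$. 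Constancy together with zero-sum freeness then gives the generator condition, exactly as in your reduction.
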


\section{Proof of main Theorems}
Throughout this section, we denote $G$ to be the group
\begin{equation}\label{equation G=Gm,n}
G_{m,n}^\star=\langle x, y| x^m=y^n=1, x^{-1}yx=y^s\rangle,
 \end{equation} with
\begin{equation}\label{equation ordqs=m for each q}
{\rm ord}_q s=m \mbox{ for every prime divisor } q \mbox{ of } n.
\end{equation}
  Let $K=\langle x\rangle$ and $N=\langle y\rangle$. Then $C_n\cong N\lhd G$ and $K\cong G/N\cong C_m$.
By \eqref{equation G=Gm,n}, we derive that
\begin{equation}\label{equation x-ayxa=ysa}
x^{-a}u x^a=u^{s^a} \mbox{ for any } u\in N  \mbox{ and any integer } a\geq 0.
\end{equation}
It follows that
\begin{equation}\label{equation s m power=1 mod n}
s^m\equiv 1 {\pmod n},
\end{equation} since $y=x^{-m} y x^m=y^{s^m}$. Let $\varphi$ be the canonical homomorphism of $G$ onto $G/N$. For any sequence $T=g_1\bm\cdot \ldots\bm\cdot g_{\ell}$ over $G$, we denote $\varphi(T)=\varphi(g_1)\bm\cdot \ldots\bm\cdot \varphi(g_{\ell})$. Then $\varphi(T)$ is a sequence over $G/N$. Note that {\sl if $\varphi(T)$ is a product-one sequence over $G/N$ then $\pi(T)\subseteq N$, this is because $G/N$ is abelian and so $\varphi(\pi(T))=\pi(\varphi(T))=\{1_{G/N}\}$.}

One fact worth noting is that for a cyclic group $C_n$ of order $n$, we have the following equality $\mathsf d(C_n)+1=\mathsf D(C_n)=n$, which shall be used later in this manuscript.

To prove the main theorems, we still need some lemmas.

\begin{lemma}\label{mequiv1}
Let $G=G_{m,n}^\star$ be given as above. Then the following conclusions hold.
\begin{itemize}
\item[(i)] $n\equiv1 \pmod m$.
\item[(ii)]  $\gcd(s^a-1,n)=1$ for every integer $a$ such that $m\nmid a$.
\item[(iii)] If $g^k\in N$ for some $g\in G\setminus N$, then $g^k=1$.
\end{itemize}
\end{lemma}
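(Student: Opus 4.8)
The plan is to treat the three parts in sequence, using the purely number-theoretic parts (i) and (ii) as preparation for the group-theoretic part (iii). For (i), I would exploit the defining hypothesis \eqref{equation ordqs=m for each q}: for every prime divisor $q$ of $n$ we have ${\rm ord}_q s = m$. In particular $s$ is a unit modulo $q$, so its order divides $|(\mathbb{Z}/q\mathbb{Z})^\times| = q-1$; hence $m \mid q-1$ and $q \equiv 1 \pmod m$. Writing $n$ as a product of prime powers $\prod_i q_i^{a_i}$, each factor satisfies $q_i^{a_i} \equiv 1 \pmod m$, and multiplying these congruences gives $n \equiv 1 \pmod m$.

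For (ii), I would argue by contradiction. Suppose $m \nmid a$ but $\gcd(s^a - 1, n) > 1$, and let $q$ be a prime dividing both $s^a - 1$ and $n$. Then $s^a \equiv 1 \pmod q$, so ${\rm ord}_q s \mid a$; but ${\rm ord}_q s = m$ by \eqref{equation ordqs=m for each q}, which forces $m \mid a$, a contradiction. Hence $\gcd(s^a - 1, n) = 1$.

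Part (iii) is the crux, and the place where (ii) is decisive. Writing $g = x^a y^b$ with $m \nmid a$ (which holds precisely because $g \in G \setminus N$), I would first establish by induction on $k$, using the commutation rule $y^c x^a = x^a y^{c s^a}$ that follows from \eqref{equation x-ayxa=ysa}, the closed form
$$
g^k = (x^a y^b)^k = x^{ka}\, y^{\,b(1 + s^a + \cdots + s^{(k-1)a})}.
$$
Setting $\Sigma = 1 + s^a + \cdots + s^{(k-1)a}$, the telescoping identity $(s^a - 1)\Sigma = s^{ka} - 1$ holds over $\mathbb{Z}$. The hypothesis $g^k \in N$ says the $G/N$-component of $g^k$ is trivial, i.e.\ $m \mid ka$; this yields both $x^{ka} = 1$ and, via \eqref{equation s m power=1 mod n}, $s^{ka} \equiv 1 \pmod n$. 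Consequently $(s^a - 1)\Sigma \equiv 0 \pmod n$. Now I invoke (ii): since $m \nmid a$ we have $\gcd(s^a - 1, n) = 1$, so $s^a - 1$ is invertible modulo $n$ and therefore $\Sigma \equiv 0 \pmod n$, whence $y^{b\Sigma} = 1$. This gives $g^k = x^{ka} y^{b\Sigma} = 1$.

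The only genuinely delicate step is the inductive power formula together with the observation that the $y$-exponent factors through $(s^a - 1)$; once that is in place, the coprimality supplied by (ii) does all the remaining work. I therefore expect the main obstacle to be the bookkeeping in the noncommutative computation of $g^k$, rather than any conceptual difficulty.
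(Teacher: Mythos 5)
Your proposal is correct and follows essentially the same route as the paper: parts (i) and (ii) are argued identically, and part (iii) rests on the same key ingredients (the conjugation relation producing the geometric sum $1+s^a+\cdots+s^{(k-1)a}$, the telescoping identity against $s^a-1$, the divisibility $m\mid ka$ from $g^k\in N$ together with $s^m\equiv 1\pmod n$, and the coprimality from (ii)). The only cosmetic difference is that the paper shows $g^{k(s^a-1)}=1$ inside the group and then inverts $s^a-1$ via a B\'ezout identity applied to $g^k$, whereas you invert $s^a-1$ directly in $\mathbb{Z}/n\mathbb{Z}$ on the exponent $\Sigma$ of the normal form $g^k=x^{ka}y^{b\Sigma}$.
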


\begin{proof} (i)  By \eqref{equation ordqs=m for each q} and the Fermat's little theorem, we have that $m\mid q-1$ and so $q\equiv1 \pmod m$ for every prime divisor $q$ of $n$. This implies  $n\equiv1 \pmod m$, done.

(ii) Assume to the contrary that $\mbox{gcd}(s^{a}-1,n)\neq 1$ for some integer $a$ with $m\nmid a$. Then there exists a prime divisor $q$ of $n$ such that $s^{a}-1\equiv 0 \pmod q$. By \eqref{equation ordqs=m for each q}, we have $m\mid a$, yielding a contradiction.

(iii) Since $g\in G\setminus N$, without loss of generality we may assume that $g=x^{-a}h$ for some $a\in[1,m-1]$ and $h\in N$. Since $g^k\in N$ and $N\lhd G$, we derive that $m\mid ka$. Combined with \eqref{equation x-ayxa=ysa} and \eqref{equation s m power=1 mod n}, we have that
\begin{align*}
g^{k(s^a-1)}=& \left[(x^{-a}hx^a)*(x^{-2a}hx^{2a})*\cdots *(x^{-(k-1)a}hx^{(k-1)a})*(x^{-ka}hx^{ka})\right]^{s^a-1}\\
=& \left[(x^{-a}hx^a)*(x^{-2a}hx^{2a})*\cdots *(x^{-(k-1)a}hx^{(k-1)a})*h\right]^{s^a-1}\\
=& \left[h^{s^a}*h^{s^{2a}}*\cdots *h^{s^{(k-1)a}}*h\right]^{s^a-1}\\
=& \left[h^{s^a+s^{2a}+\cdots +s^{(k-1)a}+1}\right]^{s^a-1}\\
=&h^{s^{ka}-1}=1.
\end{align*}
By (ii), we see $\mbox{gcd}(s^a-1,n)=1$, i.e., there exist integers $\mu, \nu$ such that the integer $1=(s^a-1) \mu+n \nu$. Since $(g^k)^n=1$, it follows that $g^k=(g^k)^{(s^a-1) \mu+n \nu}=(g^{k(s^a-1)})^{\mu}(g^k)^{n \nu}=1$.
This completes the proof of the lemma.
\end{proof}



\begin{lemma}\label{basic} Let $M$ be a subgroup of $N=\langle y\rangle$, and let $u$ be an element of $N$.
\begin{itemize}
\item[(i)] If $u^{s^i}\in M$ for some $i\in [0, m-1]$, then $u\in M$.
\item[(ii)] If there exist two distinct integers $i,j\in [0,m-1]$ such that $u^{s^i}$ and $u^{s^j}$ belong to the same coset of $M$,  then $u\in M$.
\item[(iii)] If $u\neq 1$ then $u^{s^0},u^{s^1},\ldots,u^{s^{m-1}}$ are pairwise distinct nonidentity elements.
\end{itemize}
\end{lemma}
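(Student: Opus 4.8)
The plan is to reduce all three parts to a single structural feature of $N=\langle y\rangle$: since $N$ is cyclic of order $n$, it has a unique subgroup of each order dividing $n$, so every subgroup $M\leq N$ is characteristic. Consequently, for any integer $a$ with $\gcd(a,n)=1$ the map $v\mapsto v^a$ is an automorphism of $N$ and stabilizes $M$, which gives the single auxiliary fact I would use throughout:
\begin{equation*}
\text{if } \gcd(a,n)=1 \text{ and } v\in N,\quad\text{then } v^a\in M \iff v\in M.
\end{equation*}
Before applying it, I would record that $\gcd(s,n)=1$: for ${\rm ord}_q\,s$ to be defined in \eqref{equation ordqs=m for each q}, $s$ must be coprime to every prime divisor $q$ of $n$, hence coprime to $n$, and therefore $\gcd(s^i,n)=1$ for every $i\geq 0$.

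For part (i) this finishes matters immediately: taking $a=s^i$ in the auxiliary fact, the hypothesis $u^{s^i}\in M$ forces $u\in M$. For part (ii), I would first translate the hypothesis. Since $N$ is abelian, $u^{s^i}$ and $u^{s^j}$ lie in the same coset of $M$ exactly when $u^{s^i}\,(u^{s^j})^{-1}=u^{\,s^i-s^j}\in M$. Assuming without loss of generality that $i>j$ and factoring $s^i-s^j=s^j(s^{i-j}-1)$, I would check that this exponent is coprime to $n$: the factor $s^j$ is coprime to $n$ as above, while $\gcd(s^{i-j}-1,n)=1$ follows from Lemma \ref{mequiv1}(ii), because $1\leq i-j\leq m-1$ forces $m\nmid(i-j)$. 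Applying the auxiliary fact with $a=s^i-s^j$ then yields $u\in M$.

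For part (iii), the nonidentity claim is again a direct consequence of the auxiliary fact, since the automorphism $v\mapsto v^{s^i}$ sends the nonidentity element $u$ to a nonidentity element; and pairwise distinctness is precisely the special case $M=\{1\}$ of part (ii): if $u^{s^i}=u^{s^j}$ with $i\neq j$, then $u^{s^i}$ and $u^{s^j}$ lie in the same (trivial) coset of $\{1\}$, so (ii) gives $u=1$, contrary to $u\neq 1$. The one genuinely nontrivial input is the coprimality $\gcd(s^{i-j}-1,n)=1$ needed in (ii); this is exactly where the defining hypothesis that ${\rm ord}_q\,s=m$ for every prime $q\mid n$ enters (through Lemma \ref{mequiv1}(ii)), and it is the only step that would fail for a general metacyclic group $G_{m,n}$. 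I expect no other obstacle, as everything else is a formal consequence of the cyclicity of $N$.
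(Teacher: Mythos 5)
Your proposal is correct and follows essentially the same route as the paper: both arguments rest on the coprimality facts $\gcd(s^i,n)=1$ and $\gcd(s^{i-j}-1,n)=1$ (the latter via Lemma~\ref{mequiv1}(ii)), translate the coset hypothesis in (ii) into $u^{s^i-s^j}\in M$, and obtain (iii) by taking $M=\{1\}$ in (ii). The only difference is cosmetic: you justify the key step ``$v^a\in M \iff v\in M$ for $\gcd(a,n)=1$'' by viewing $v\mapsto v^a$ as an automorphism fixing the characteristic subgroup $M$, whereas the paper writes out the same fact via a B\'ezout identity $1=a\mu+n\nu$.
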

\begin{proof} (i) By \eqref{equation s m power=1 mod n}, we see $\mbox{gcd}(s,n)=1$, and so $\mbox{gcd}(s^i, n)=1$. Then there exist integers $\mu, \nu$ such that the integer $1=s^i \mu+n \nu$.  Since $u^n=u^{|N|}=1$, it follows that $u=u^{s^i \mu+n \nu}=(u^{s^i})^{\mu} u^{n\nu}=(u^{s^i})^{\mu}\in M$.

(ii) Since both $u^{s^i}$ and $u^{s^j}$ belong to the same coset of $M$, we have $u^{(s^{j-i}-1)s^i}=u^{s^j-s^i}\in M$. Since $m\nmid j-i$, it follows from Lemma~\ref{mequiv1}(ii) that $\mbox{gcd}(s^{j-i}-1,n)=1$, and thus $\mbox{gcd}((s^{j-i}-1)s^i,n)=1$. Similarly as in the argument for Conclusion (i), we can show that $u\in M$.

(iii) By taking  $M=\{1\}$ and applying Conclusion (ii), we have that $u^{s^0},u^{s^1},\ldots,u^{s^{m-1}}$ are pairwise distinct. Moreover, for any $i\in [1,m-1]$, since $\gcd({\rm ord}(u),s^{i})=1$, it follows that ${\rm ord}(u)\nmid s^{i}$ and so $u^{s^0},u^{s^1},\ldots,u^{s^{m-1}}$ are nonidentity elements.
\end{proof}

\begin{lemma}\label{conjugationone} Let $T=g_1\bm\cdot \ldots \bm\cdot g_t$ be a sequence over $G$ such that $\varphi (T)$ is a minimal product-one sequence over $G/N$.

(i) For any nonidentity element $u\in\pi(T)$, then there exist $r_1,\ldots, r_t\in [0,m-1]$ such that $u^{s^{r_{1}}}, u^{s^{r_{2}}},\ldots, u^{s^{r_{t}}}$ are pairwise distinct elements of $\pi(T)$;

(ii) For any subgroup $M$ of $N$, if $\pi(T)\cap M=\emptyset$ then $|\pi(T)M|\geq t|M|$.
\end{lemma}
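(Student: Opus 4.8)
The plan is to exploit two facts: cyclically permuting a product of $T$ has the effect of conjugating that product, and conjugation by a power of $x$ carries an element $u\in N$ to a ``power'' $u^{s^a}$. Since $\varphi(T)$ is product-one over the abelian group $G/N$, I first record that $\pi(T)\subseteq N$, so the given $u$ lies in $N$ and the maps $v\mapsto v^{s^a}$ are exactly the ones governing its conjugates. Writing $G/N=\langle\varphi(x)\rangle\cong C_m$, I will encode each $g_i$ as $g_i=x^{a_i}h_i$ with $a_i\in[0,m-1]$ and $h_i\in N$, so that $\varphi(T)$ becomes a zero-sum sequence $a_1\bm\cdot\ldots\bm\cdot a_t$ over $\mathbb Z/m\mathbb Z$.

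For (i), fix a permutation $\sigma$ realizing $u=g_{\sigma(1)}*\cdots*g_{\sigma(t)}$ and, for $k\in[0,t-1]$, consider the cyclic shift $w_k=g_{\sigma(k+1)}*\cdots*g_{\sigma(t)}*g_{\sigma(1)}*\cdots*g_{\sigma(k)}$, which is again a product of all terms of $T$ and hence lies in $\pi(T)$. With $P_k=g_{\sigma(1)}*\cdots*g_{\sigma(k)}$ one checks directly that $w_k=P_k^{-1}uP_k$; since $u\in N$ and $N$ is abelian, the $N$-part of $P_k$ acts trivially, and by \eqref{equation x-ayxa=ysa} the $x$-exponent $b_k\equiv a_{\sigma(1)}+\cdots+a_{\sigma(k)}\pmod m$ of $P_k$ yields $w_k=u^{s^{b_k}}$. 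By \eqref{equation s m power=1 mod n} this power depends only on $b_k$ modulo $m$, so I may set $r_k:=b_k\bmod m\in[0,m-1]$.

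It then remains to show $b_0,b_1,\ldots,b_{t-1}$ are pairwise distinct modulo $m$, which is precisely where the minimality of $\varphi(T)$ is used: if $b_k\equiv b_l\pmod m$ for some $0\le k<l\le t-1$, then $\varphi(g_{\sigma(k+1)})\bm\cdot\ldots\bm\cdot\varphi(g_{\sigma(l)})$ is a proper, nonempty product-one subsequence of $\varphi(T)$, contradicting minimality. Hence $r_0,\ldots,r_{t-1}$ are $t$ distinct residues, and since $u\neq1$, Lemma~\ref{basic}(iii) guarantees that $u^{s^{r_0}},\ldots,u^{s^{r_{t-1}}}$ are pairwise distinct; relabeling them as $r_1,\ldots,r_t$ completes (i).

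Finally, (ii) is obtained by upgrading (i) from distinct elements to distinct cosets of $M$. Under $\pi(T)\cap M=\emptyset$ we have $1\notin\pi(T)$, so any chosen $u\in\pi(T)$ satisfies $u\neq1$ and $u\notin M$; applying (i) gives distinct $r_1,\ldots,r_t\in[0,m-1]$ with $u^{s^{r_i}}\in\pi(T)$. Were two of these in the same coset of $M$, say $u^{s^{r_i}}$ and $u^{s^{r_j}}$ with $r_i\neq r_j$, then Lemma~\ref{basic}(ii) would force $u\in M$, a contradiction. Thus these $t$ elements occupy $t$ distinct cosets of $M$, each meeting $\pi(T)$, so $|\pi(T)M|\geq t|M|$. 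The heart of the argument, and the step I expect to require the most care, is (i): correctly identifying cyclic shifts with conjugation and translating the minimality of $\varphi(T)$ into distinctness of the partial sums $b_k$; once this is established, parts (iii) and (ii) of Lemma~\ref{basic} handle the remaining bookkeeping.
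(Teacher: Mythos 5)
Your proposal is correct and follows essentially the same route as the paper's own proof: both identify cyclic shifts of a fixed product with conjugation by partial products, use \eqref{equation x-ayxa=ysa} to write each conjugate as $u^{s^{r}}$, translate the minimality of $\varphi(T)$ into pairwise distinctness of the partial $x$-exponents modulo $m$, and then invoke Lemma~\ref{basic}(iii) for distinctness of the elements in (i) and Lemma~\ref{basic}(ii) for distinctness of the cosets in (ii). The only difference is cosmetic (you run the distinctness argument in the contrapositive direction relative to the paper), so no further comment is needed.
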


\begin{proof} (i) Without loss of generality, we may assume that
\begin{equation}\label{equation u=g1timestogt}
u=g_1*\cdots *g_t.
\end{equation}
 Since $\varphi(T)$ is a minimal product-one sequence over $G/N\cong C_m$ of length $t$, we conclude that $t\leq {\rm D}(G/N)=m$ and
 \begin{equation}\label{equation u in N}
 u\in N.
 \end{equation}
  For each $i\in [1,t]$, we write
\begin{equation}\label{equation product of gi}
g_1*\cdots *g_i=x^{r_i}y^{c_i}
\end{equation}
 with $r_i\in[0,m-1]$ and $c_i\in [0, n-1]$.  Let $u_i=(g_{1}*\cdots *g_{i})^{-1}u(g_1*\cdots *g_{i})$ for each $i\in [1,t]$. It follows from \eqref{equation u=g1timestogt} that $u_i\in \pi(T)$, and follows from \eqref{equation x-ayxa=ysa}, \eqref{equation u in N} and \eqref{equation product of gi} that
$u_i=(x^{r_i}y^{c_i})^{-1} u (x^{r_i}y^{c_i})=y^{-c_i}(x^{-r_i} u x^{r_i})y^{c_i}=y^{-c_i} u^{s^{r_i}} y^{c_i}=u^{s^{r_i}}$,
where $i\in [1,t]$.

Then it suffices to show that $u^{s^{r_{1}}}, u^{s^{r_{2}}},\ldots, u^{s^{r_{t}}}$ are pairwise distinct. Assume to the contrary that $u^{s^{r_{i}}}=u^{s^{r_{j}}}$ for some $1\leq i<j\leq t$. By Lemma \ref{basic} (iii), we see that $r_i=r_j$. Then $g_{i+1}*\cdots *g_j=(g_1*\cdots * g_i)^{-1}(g_1*\cdots * g_j)=(x^{r_i}y^{c_i})^{-1}x^{r_j}y^{c_j}=y^{c_j-c_i}\in N$. Thus $\varphi(g_{i+1})*\cdots * \varphi(g_j)=\varphi(g_{i+1}*\cdots * g_j)=1$, yielding a contradiction to the assumption that $\varphi(T)$ is a minimal product-one sequence over $G/N$.

(ii) Since $\varphi(T)$ is a minimal product-one sequence over $G/N$, it follows that $\pi(T)\subseteq N$. Take $u\in \pi(T)$. Since $u\notin M$, it follows from Conclusion (i) and Lemma \ref{basic} (ii) that $u^{s^{r_{1}}}, u^{s^{r_{2}}},\ldots, u^{s^{r_{t}}}\in \pi(T)$ belongs to the distinct cosets of $M$, where $r_1,\ldots, r_t\in [0,m-1]$. This implies that $|\pi(T)M|\geq t|M|$, done.
\end{proof}

\begin{lemma}\label{conjugationtwo}
Let $T$ be a sequence of $m$ terms from the coset $x^aN$ of $N$
for some integer $a\in [1,m-1]$ with $\gcd(a,m)=1$. For any $u\in N$, we have  $\bigcup\limits_{i=0}^{m-1}\left(\pi(T)u^{s^{i}}\right)\subseteq \pi(T\bm\cdot u)$.
\end{lemma}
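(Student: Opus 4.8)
The plan is to realize each element of $\bigcup_{i=0}^{m-1}\pi(T)u^{s^i}$ as a product of the enlarged sequence $T\bm\cdot u$, by inserting the extra term $u$ at a suitable place in a product expansion of a fixed element of $\pi(T)$. First I would record the basic structure: since every term of $T$ lies in $x^aN$, we may write $T=g_1\bm\cdot\ldots\bm\cdot g_m$ with $g_j=x^ay^{c_j}$; then $\varphi(T)=(x^a)^{[m]}$ and $\varphi(g_1)*\cdots*\varphi(g_m)=x^{am}=1$ in $G/N\cong C_m$, so $\varphi(T)$ is product-one and hence $\pi(T)\subseteq N$. In particular every $v\in\pi(T)$ lies in the abelian subgroup $N$, which will let me commute $N$-elements freely later.

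Next, fix $v\in\pi(T)$ and a permutation $\tau$ with $v=g_{\tau(1)}*\cdots*g_{\tau(m)}$. For $k\in[0,m-1]$, form the product of $T\bm\cdot u$ obtained by inserting $u$ immediately after the first $k$ factors, namely $w_k=\big(g_{\tau(1)}*\cdots*g_{\tau(k)}\big)*u*\big(g_{\tau(k+1)}*\cdots*g_{\tau(m)}\big)\in\pi(T\bm\cdot u)$. Writing the partial product as $P_k=g_{\tau(1)}*\cdots*g_{\tau(k)}=x^{ak}y^{e_k}$ for a suitable $e_k$, I would rewrite $w_k=(P_kuP_k^{-1})*v$. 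Since $u$ and $y^{e_k}$ both lie in the abelian group $N$, we get $P_kuP_k^{-1}=x^{ak}ux^{-ak}$, and \eqref{equation x-ayxa=ysa} gives $x^{ak}ux^{-ak}=u^{s^{-ak}}$ (the exponent being read modulo $n$, which is legitimate since $\gcd(s,n)=1$). Because $v\in N$ commutes with $u^{s^{-ak}}\in N$, this yields the clean formula $w_k=v\,u^{s^{-ak}}$.

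It then remains to identify the exponent set. By \eqref{equation s m power=1 mod n} we have $s^m\equiv1\pmod n$, so $u^{s^{j}}$ depends only on $j$ modulo $m$; and since $\gcd(a,m)=1$, the map $k\mapsto-ak\pmod m$ is a bijection of $\mathbb Z/m\mathbb Z$. Hence $\{u^{s^{-ak}}:k\in[0,m-1]\}=\{u^{s^i}:i\in[0,m-1]\}$, so $\{w_k:k\in[0,m-1]\}=\{v\,u^{s^i}:i\in[0,m-1]\}\subseteq\pi(T\bm\cdot u)$. As $v\in\pi(T)$ was arbitrary, taking the union over all $v$ gives the claimed inclusion.

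I expect the only real obstacle to be bookkeeping the conjugation direction and the sign of the exponent of $s$: one must be careful that inserting $u$ after $k$ factors conjugates it by $x^{ak}$ (rather than by $x^{-ak}$), producing $u^{s^{-ak}}$, and that the reduction of $-ak$ modulo $m$ is harmless precisely because of the periodicity $s^m\equiv1\pmod n$. Everything else is a direct application of the conjugation relation \eqref{equation x-ayxa=ysa} together with the coprimality $\gcd(a,m)=1$.
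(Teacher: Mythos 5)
Your proof is correct and follows essentially the same route as the paper's: both realize the elements $vu^{s^i}$ by inserting $u$ after $k$ factors of a fixed product expansion of $v\in\pi(T)\subseteq N$, then use $\gcd(a,m)=1$ together with $s^m\equiv 1\pmod n$ to see that the resulting exponents sweep out all of $s^0,\ldots,s^{m-1}$. The only cosmetic difference is that you conjugate $u$ by the prefix (getting exponents $-ak$), whereas the paper conjugates by the suffix (getting $(m-i)a$); both reductions modulo $m$ are equivalent.
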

\begin{proof} Let $T=g_1\bm\cdot \ldots \bm\cdot g_m$, where $g_i=x^ay^{c_i}$ and $c_1,\ldots,c_{m}\in [0, n-1]$. Take an arbitrary element $v\in \pi(T)$, say $v=g_1* \cdots *g_m$.
Let $u_i=g_1* \cdots *g_iug_{i+1}* \cdots *g_m$ for each $i\in [1, m]$ (in particular, $u_m=g_1* \cdots *g_mu$).  Observe that $$u_i\in \pi(T\bm\cdot u) \mbox{ for each } i\in [1,m].$$  Combined with \eqref{equation x-ayxa=ysa}, we have $u_i=g_1 * \cdots * g_iug_{i+1} * \cdots * g_m=v[(g_{i+1}* \cdots * g_m)^{-1}u(g_{i+1} * \cdots * g_m)]=vu^{s^{(m-i)a}}$ for $i\in [1,m]$, and so
all the elements
\begin{equation}\label{equation vus0to m-1 belong to}
vu^{s^{0a}}, vu^{s^{1a}},\ldots,vu^{s^{(m-1)a}}\in \pi(T\bm\cdot u).
\end{equation}
On the other hand, since $\gcd(a,m)=1$, then $0a,1a,\ldots,(m-1)a$ is a complete system of residues modulo $m$, it follows from \eqref{equation s m power=1 mod n} that $s^{0a},s^{1a},\ldots,s^{(m-1)a}$, taken in some order, must be congruent to
 $s^{0},s^{1},\ldots,s^{m-1}$ modulo $n$ one to one. Since ${\rm ord}(u)\mid n$, it follows from \eqref{equation vus0to m-1 belong to} that $vu^{s^{i}}\in \pi(T\bm\cdot u)$ for each  $i\in [0,m-1]$. By the arbitrary choice of $v\in \pi(T)$, we have the lemma proved.
\end{proof}

\begin{lemma}\label{PIS}  Let $S$ be a product-one free sequence over $G$, and let $\varphi$ be the canonical epimorphism of $G$ onto $G/N$. If $\varphi (S)$ is a product-one sequence over $G/N$,  then $|\Pi(S)\cap N|\geq |S|$.
\end{lemma}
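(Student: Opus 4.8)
The plan is to reduce the global estimate to a blockwise one by decomposing $\varphi(S)$ into minimal product-one sequences over the abelian group $G/N\cong C_m$. Since $\varphi(S)$ is product-one over $G/N$, I would factor it as $\varphi(S)=W_1\bm\cdot\ldots\bm\cdot W_k$, a product of minimal product-one sequences (the atoms of the monoid of product-one sequences over $C_m$), and lift this factorization to a partition $S=T_1\bm\cdot\ldots\bm\cdot T_k$ with $\varphi(T_j)=W_j$ and $t_j:=|T_j|\geq1$, simply by sorting each term of $S$ into the block carrying its image. Each $\varphi(T_j)$ is then a minimal product-one sequence over $G/N$, so by the remark preceding the lemma, $\pi(T_j)\subseteq N$.

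First I would establish the blockwise bound $|\pi(T_j)|\geq t_j$. Because $T_j$ is a subsequence of the product-one free sequence $S$, we have $1\notin\Pi(S)\supseteq\pi(T_j)$, so $\pi(T_j)\cap\{1\}=\emptyset$. Applying Lemma~\ref{conjugationone}(ii) with the trivial subgroup $M=\{1\}$ (equivalently, invoking Lemma~\ref{conjugationone}(i), which exhibits $t_j$ pairwise distinct conjugate products $u^{s^{r_1}},\ldots,u^{s^{r_{t_j}}}$ of any nonidentity $u\in\pi(T_j)$) yields $|\pi(T_j)|\geq t_j$.

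Next I would combine the blocks by applying Lemma~\ref{productonefree} to the sequence of subsets $\mathbf{A}=(\pi(T_1),\ldots,\pi(T_k))$ of $G$. The crucial observation is that every element of $\Pi(\mathbf{A})$ is a genuine subsequence product of $S$ lying in $N$: a typical element is $a_{i_1}*\cdots*a_{i_r}$ with $a_{i_j}\in\pi(T_{i_j})\subseteq N$, and since $N$ is abelian this product does not depend on the order of the blocks and coincides with a product of the subsequence $T_{i_1}\bm\cdot\ldots\bm\cdot T_{i_r}$ of $S$; hence $\Pi(\mathbf{A})\subseteq\Pi(S)\cap N$. The same identification shows $1\notin\Pi(\mathbf{A})$, since otherwise a nonempty union of blocks would have product $1$, contradicting that $S$ is product-one free. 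Lemma~\ref{productonefree} then gives $|\Pi(S)\cap N|\geq|\Pi(\mathbf{A})|\geq\sum_{j=1}^{k}|\pi(T_j)|\geq\sum_{j=1}^{k}t_j=|S|$.

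The point to watch, and the only place requiring genuine care, is the interplay between the nonabelian ambient group $G$ and the abelian normal subgroup $N$: the combining step works precisely because once each block product lies in $N$ the blocks commute, so reordering them neither leaves $N$ nor changes the value, ensuring each cross-block product is an honest subsequence product of $S$. Verifying $\Pi(\mathbf{A})\subseteq\Pi(S)\cap N$ and $1\notin\Pi(\mathbf{A})$ as exact set inclusions (not merely up to reordering) is the delicate part; once these are in hand, the numerics follow immediately from the two cited lemmas.
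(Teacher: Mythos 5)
Your proof is correct and takes essentially the same route as the paper: both factor $S$ into blocks whose images under $\varphi$ are minimal product-one sequences over $G/N$, obtain the blockwise bound $|\pi(T_j)|\geq |T_j|$ from Lemma~\ref{conjugationone}, and combine the blocks by applying Lemma~\ref{productonefree} to $\mathbf{A}=(\pi(T_1),\ldots,\pi(T_k))$. The only difference is your emphasis on commutativity inside $N$, which is not actually needed for the inclusion $\Pi(\mathbf{A})\subseteq \Pi(S)\cap N$: since the indices $i_1<\cdots<i_r$ are taken in order, concatenating an ordering of each block already exhibits every element of $\Pi(\mathbf{A})$ as a product of the subsequence $T_{i_1}\bm\cdot\ldots\bm\cdot T_{i_r}$ of $S$, and membership in $N$ follows because each factor lies in $N$.
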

\begin{proof} Since $\varphi (S)$ is a product-one sequence over $G/N$, we can obtain a factorization $S=S_1\bm\cdot \ldots \bm\cdot S_t$ such that $\varphi(S_i)$ is a minimal product-one subsequence over $G/N$ for each $i\in [1,t]$, where $t\geq 1$. For each $i\in [1,t]$, we take $u_i\in \pi(S_i)$, since $S$ is product-one free over $G$, we have $1\notin\pi(S_i)$ and so
$u_i\neq 1$. Combined with Lemma \ref{conjugationone} (i), we have that
\begin{equation}\label{equation cad pi(Si) geq cad Si}
|\pi(S_i)|\geq  |S_i| \mbox{ for each } i\in [1,t].
\end{equation}
Let $\mathbf{A}=\left(\pi(S_1),\pi(S_2),\ldots,\pi(S_t)\right)$. Since $\varphi(S_i)$ is a minimal product-one subsequence over $G/N$, it follows that $\pi(S_i)\subseteq N$ for each $i\in [1,t]$. This implies that $\Pi(\mathbf{A})\subseteq \Pi(S)\cap N$. Since $1\notin \Pi(S)$, then $1\notin \Pi(\mathbf{A})$. By \eqref{equation cad pi(Si) geq cad Si}, Lemma~\ref{productonefree}, we have that $|\Pi(S)\cap N|\geq |\Pi(\mathbf{A})|\geq \sum_{i=1}^{t} |\pi(S_i)|\geq \sum_{i=1}^{t}|S_i|=|S|$. This completes the proof of the lemma.
\end{proof}

Now we are in a position to prove our main theorem.

\vspace{0.5cm}

\noindent {\bf Proof of Theorem \ref{d(Gmnstar)}} \ Let $S_0=x^{[m-1]}\bm\cdot y^{[n-1]}$. It is easy to check that $S_0$ is a product-one free sequence over $G$. Then $\mathsf d(G)\geq |S_0|=m+n-2$ follows readily.
Now we take a sequence $S$ over $G$ of length $m+n-1$. It suffices to show that $1\in\Pi(S)$. Suppose to the contrary that
\begin{equation}\label{equation 1 notin Pi(S)}
1\notin\Pi(S).
\end{equation}
Since $|\varphi(S)|=|S|\geq m=\mathsf d(C_m)+1=\mathsf d(G/N)+1$, we can take a subsequence $T\mid S$ such that $\varphi(T)$ is a {\sl product-one sequence over $G/N$} with maximal length.
Then $\varphi(S\bm\cdot T^{[-1]})$ is a product-one free sequence over $G/N$, and thus $|S\bm\cdot T^{[-1]}|=|\varphi(S\bm\cdot T^{[-1]})|\leq \mathsf d(G/N)=m-1$.
It follows from \eqref{equation 1 notin Pi(S)} that $T$ is product-one free. By applying Lemma~\ref{PIS} with the sequence $T$,  we have that $|\Pi(T)\cap N|\geq |T|=|S|-|S\bm\cdot T^{[-1]}|\geq (m+n-1)-(m-1)=n=|N|$, which implies that $N=N\cap \Pi(T)$ and so $1\in \Pi(T)$, a contradiction with $T$ being product-one free. Therefore, $1\in\Pi(S)$ and the proof is completed.                                   \qed

\noindent {\bf Proof of Theorem \ref{inversed(G)}} \ We first show the necessity, i.e., to prove that if $S$ is of the given form as in (a) or (b), then it is product-one free. If $S$ is of the form given as in (b), the conclusion is easily to verify. Hence, we need only to consider the case that
$S$ is of the form given as in (a). Let $S'$ be an arbitrary nonempty subsequence of $S$. It suffices to show that $1\notin \pi(S')$.

Let $S'=S_1'\cdot S_2'$ where $S_1'\mid (x^ay^{b_1})\bm\cdot\ldots\bm\cdot (x^ay^{b_{m-1}})$ and $S_2'\mid (y^c)^{[n-1]}$. Suppose $S_1'\neq \emptyset$. Since $\gcd(a,m)=1$ and $|S_1'|\leq m-1$, it follows that $\varphi(S')$ is not a product-one sequence over $G/N$, i.e. $1_{G/N}\notin\varphi(\pi(S'))=\pi(\varphi(S'))$ and so $\pi(S')\cap N=\emptyset$. Then, $1\notin\pi(S')$ follows readily. Otherwise $S_1'=\emptyset$. Then $S'\mid (y^c)^{[n-1]}$. Clearly, $1\notin\pi(S')$ since $\mbox{gcd}(c,n)=1$. This proves the necessity of the theorem.

Now we show the sufficiency, i.e., to prove that if $S$ is a product-one free sequence over $G$ of length
\begin{equation}\label{equation length|S|=m+n-2}
|S|=m+n-2,
\end{equation}
 then it is of the given form as (a) or (b). Let $T$ be a subsequence of $S$  such that $\varphi(T)$ is a product-one sequence over $G/N$ with maximal length. Since $\varphi(S\bm\cdot T^{[-1]})$ is a  product-one free sequence over $G/N$, it follows that $|S\bm\cdot T^{[-1]}|=|\varphi(S\bm\cdot T^{[-1]})|\leq \mathsf d(G/N)=m-1$. Since $T$ is product-one free over $G$,
it follows from Lemma \ref{PIS} that $n-1=|N\setminus \{1\}|\geq |\Pi(T)\cap N|\geq  |T|=|S|-|S\bm\cdot T^{[-1]}|\geq (m+n-1)-(m-1)= n-1$, which implies
\begin{equation}\label{equation ST-1=m-1}
|S\bm\cdot T^{[-1]}|=m-1=\mathsf d(G/N).
\end{equation}
By Lemma~\ref{n-1}, we have that $\varphi(S\bm\cdot T^{[-1]})=(x^aN)^{[m-1]}$ for some $a\in [1,m-1]$ with
\begin{equation}\label{equation gcdam=1}
\gcd(a,m)=1.
\end{equation}
 Moreover, by Lemma~\ref{normalsequence}, we have that
\begin{equation}\label{equation all terms from N or}
\supp(\varphi(S))\subseteq \{N, x^aN\},
 \end{equation} i.e., all terms of $S$ are from the subgroup $N$ or from the coset $x^aN$. Let $S_N$ be the subsequence of $S$ consisting of all terms of $S$ from $N$, and let $W=S\cdot S_N^{[-1]}.$ It follows from \eqref{equation all terms from N or} that $\supp(\varphi(W))=\{x^aN\}$. Combined with  \eqref{equation ST-1=m-1}, \eqref{equation gcdam=1} and the maximality of $|T|$, we conclude that
 \begin{equation}\label{equation form of W}
 W=(x^ah_1)\bm\cdot\ldots \bm\cdot (x^ah_{t}),
 \end{equation}
 where  $h_1,\ldots,h_t\in N$, and
  \begin{equation}\label{equation t=length of W}
 t=|W|=|\varphi(W)|=km+m-1
 \end{equation}
  for some $k\in [0, (n-1)/m]$.

Now we assume $k=0$. Then $|S_N|=|S|-|W|=n-1=|N|-1$. Since $S_N$ is a product-one free sequence over $N$, it follows from Lemma \ref{n-1} that
$S_N=(y^c)^{[n-1]}$  with $\gcd(c,n)=1$. Then we obtain $S=W\bm\cdot S_N=(x^ay^{b_1})\bm\cdot\ldots\bm\cdot (x^ay^{b_{m-1}})\bm\cdot (y^c)^{[n-1]}$
as desired. Hence, we need only to consider the case of $$k\geq 1,$$
and we shall prove that $G\cong G_{2,3}$ and $S=x\bm\cdot xy\bm\cdot xy^2$.
\vspace{0.5cm}

\noindent {\bf Claim A.} $S=W$, $|S|=(k+1)m-1$ and $n=km+1$.
\vspace{0.5cm}

\noindent{\sl Proof of Claim A.} \ To prove Claim A, by \eqref{equation length|S|=m+n-2} and \eqref{equation t=length of W}, it is sufficient to show that $S=W$. Suppose to the contrary that $S\neq W$, i.e., $|S_N|\geq 1$.  Take $u\mid S_N$. Then $u\neq 1$. Let $T_1$ be a subsequence of $W$ with
\begin{equation}\label{equation length T1=m}
|T_1|=m.
\end{equation} Since $\gcd(a,m)=1$, it follows from \eqref{equation form of W}, \eqref{equation length T1=m} that $\varphi(T_1)$ is a minimal product-one sequence over $G/N$ and $\pi(T_1)=\Pi(T_1)\cap N$.
Since $T_1$ is a product-one free sequence over $G$, it follows from Lemma  \ref{PIS} that
\begin{equation}\label{equation card pi(T1) geq T1}
|\pi(T_1)|=|\Pi(T_1)\cap N|\geq |T_1|=m.
\end{equation}
Since $u\in N$, it follows from \eqref{equation length T1=m} that
$\pi(T_1)\cup \pi(T_1\bm\cdot u)\subseteq N.$
By Lemma~\ref{basic}~(iii), we have that $u^{s^0}, u^{s^{1}},\ldots, u^{s^{m-1}}$ are pairwise distinct, and combined with Lemma~\ref{conjugationtwo}, we have that
\begin{equation}\label{equation piprowith1uetc}
\Pi(T_1\bm\cdot u)\cap N\supseteq \pi(T_1)\cup \left(\bigcup\limits_{i=0}^{m-1}\left(\pi(T_1)u^{s^{i}}\right)\right)=\pi(T_1)\{1,u, u^{s},\ldots, u^{s^{m-1}}\}.
\end{equation}

Now we show that \begin{equation}\label{equation cadpi(T1u)geq 2m}
|\Pi(T_1\bm\cdot u)\cap N|\geq 2m.
\end{equation}
Let $M={\rm Stab}\left(\pi(T_1)\{1,u, u^{s},\ldots, u^{s^{m-1}}\}\right)$. It follows from \eqref{equation piprowith1uetc} that $M<N$.  Suppose that $M=\{1\}$. By \eqref{equation card pi(T1) geq T1}, \eqref{equation piprowith1uetc} and Lemma \ref{Kneser}, we have that
 $|\Pi(T_1\bm\cdot u)\cap N|\geq |\pi(T_1)\{1,u, u^{s},\ldots, u^{s^{m-1}}\}|\geq |\pi(T_1)|+|\{1,u, u^{s},\ldots, u^{s^{m-1}}\}|-1\geq 2m$, done.  Otherwise $|M|>1$. Since $1\notin \Pi(T_1\bm\cdot u)$, it follows from \eqref{equation piprowith1uetc} that $1\notin\pi(T_1)\{1, u, u^{s},\ldots, u^{s^{m-1}}\}$, which implies that $\pi(T_1)\cap M=\emptyset$.
Recall that $\varphi(T_1)$ is a minimal product-one sequence over $G/N$. Combined with \eqref{equation length T1=m}, \eqref{equation piprowith1uetc}, Lemma \ref{Kneser} and Lemma \ref{conjugationone} (ii), we have that
 $|\Pi(T_1\bm\cdot u)\cap N|\geq |\pi(T_1)\{1,u, u^{s},\ldots, u^{s^{m-1}}\}|\geq |\pi(T_1)M|+|\{1,u, u^{s},\ldots, u^{s^{m-1}}\}M|-|M|\geq m|M|+|M|-|M|\geq 2m$. Therefore, we have \eqref{equation cadpi(T1u)geq 2m} proved.

Let $T_2$  be a subsequence of $W\bm\cdot T_1^{[-1]}$ with
\begin{equation}\label{equation length T2=(k-1)m}
|T_2|=(k-1)m.
\end{equation}
 Notice that $(S_N\bm\cdot u^{[-1]})\bm\cdot T_2$ is a product-one free sequence over $G$ and $\varphi\left((S_N\bm\cdot u^{[-1]})\bm\cdot T_2\right)$ is
 a product-one sequence over $G/N$. By Lemma~\ref{PIS}, we have
 \begin{equation}\label{equation pi SnuT2capNgeq}
 |\Pi\left((S_N\bm\cdot u^{[-1]})\bm\cdot T_2\right) \cap N|\geq |(S_N\bm\cdot u^{[-1]})\bm\cdot T_2|.
 \end{equation}
Let ${\bf B}=(B_1,B_2)$ be a sequence of subsets of $G$, where $B_1=\Pi(T_1\bm\cdot u)\cap N$ and $B_2=\Pi\left((S_N\bm\cdot u^{[-1]})\bm\cdot T_2\right) \cap N$. Observe that
$\Pi(S)\cap N\supseteq \Pi({\bf B}).$
Since $1\notin \Pi(S)$, it follows from \eqref{equation t=length of W}, \eqref{equation cadpi(T1u)geq 2m}, \eqref{equation length T2=(k-1)m}, \eqref{equation pi SnuT2capNgeq} and Lemma~\ref{productonefree} that
\begin{align*}
|\Pi(S)\cap N|\geq |\Pi({\bf B})|&\geq |\Pi(T_1\bm\cdot u)\cap N|+|\Pi((S_N\bm\cdot u^{[-1]})\bm\cdot T_2) \cap N|\\
&\geq |\Pi(T_1\bm\cdot u)\cap N|+|(S_N\bm\cdot u^{[-1]})\bm\cdot T_2|\\
&\geq 2m+(|S_N|-1)+(k-1)m\\
&=(km+m-1)+|S_N|\\
&=|W|+|S_N|\\
&=|T|\\
&=m+n-2\geq n=|N|,
\end{align*}
which implies that $N\subseteq \Pi(S)$, a contradiction. This completes the proof of the claim. \qed

By Claim A, we can fix a factorization of $W$ given as
\begin{equation}\label{equation factorization of W}
W=T_0\bm\cdot T_1\bm\cdot \ldots \bm\cdot T_{k-1},
\end{equation} where $|T_0|=2m-1$ and $|T_i|=m$ for $i\in [1,k-1]$.
 By taking some subsequence $T'\mid T_0$ of length  $m$ and by applying Lemma \ref{conjugationone} (ii) with $M=\{1\}$, we conclude that
  \begin{equation}\label{equation 1 geq m}
 |\Pi_m(T_0)|\geq |\pi(T')|=|\pi(T')M|\geq  m.
  \end{equation}
  By  applying Lemma~\ref{PIS}, we have
    \begin{equation}\label{equation 2 geq (k-1)m}
  |\Pi(W\bm\cdot T_0^{[-1]})\cap N|\geq |W\bm\cdot T_0^{[-1]}|=(k-1)m.
    \end{equation}
 Let ${\bf C}=(C_1,C_2)$ be a sequence of subsets of $G$, where $C_1=\Pi_m(T_0)$ and $C_2=\Pi(W\bm\cdot T_0^{[-1]})\cap N$. Note that
$\Pi({\bf C})\subseteq \Pi(W)\cap N.$ Then by \eqref{equation 1 geq m}, \eqref{equation 2 geq (k-1)m}, Claim A and Lemma~\ref{productonefree}, we derive that $km=n-1=|N\setminus \{1\}| \geq |\Pi(W)\cap N|\geq |\Pi({\bf C})|\geq |\Pi_m(T_0)|+|\Pi(W\bm\cdot T_0^{[-1]})\cap N|\geq m+(k-1)m=km$, which implies that
\begin{equation}\label{equation car Pim(T0)=m}
|\Pi_m(T_0)|= m.
\end{equation}

Since $S$ is product-one free, it follows from Lemma~\ref{mequiv1}~(iii) that $\mathsf h(W)\leq m-1,$
and so $\mathsf h(T_0) \leq m-1$. Then we can obtain a factorization of the sequence $T_0$ with
\begin{equation}\label{equation T0=A1dotsAm-1}
T_0=A_1\bm\cdot \ldots \bm\cdot A_{m-1}
\end{equation}
such that $|A_1|=3$ and $|A_2|=\cdots=|A_{m-1}|=2$, and in particular each subsequence $A_i$ is {\sl square-free} (a set) for $i\in [1,m-1]$. Observe that $$\Pi_m(T_0)\supseteq\Pi_2(A_1) A_2 \ldots A_{m-1}.$$ Without loss of generality, we may assume that $A_1=\{g_1,g_2,g_3\}$ and $A_i=\{g_{2i}, g_{2i+1}\}$ for each $i\in [2,m-1]$. Recall from
\eqref{equation form of W} that $$g_j=x^a h_j \mbox{ where } h_j\in N$$ for each $j\in [1,2m-1]$.
By \eqref{equation x-ayxa=ysa}, we can easily check that the set $\Pi_2(A_1)$ consists of the following elements: $$x^{2a}h_1^{s^{a}}h_2, x^{2a}h_1^{s^{a}}h_3, x^{2a}h_2^{s^{a}}h_1, x^{2a}h_2^{s^{a}}h_3, x^{2a}h_3^{s^{a}}h_1, x^{2a}h_3^{s^{a}}h_2$$ (not necessarily pairwise distinct). Now, we let $B_1,\ldots, B_{m-1}$ be subsets of $N$, where
$B_1=\supp\big{(}(h_1^{s^{(m-1)a}}h_2^{s^{(m-2)a}})\bm\cdot (h_1^{s^{(m-1)a}}h_3^{s^{(m-2)a}})$
$\bm\cdot (h_2^{s^{(m-1)a}}h_1^{s^{(m-2)a}})\bm\cdot
(h_2^{s^{(m-1)a}}h_3^{s^{(m-2)a}})\bm\cdot (h_3^{s^{(m-1)a}}h_1^{s^{(m-2)a}})\bm\cdot (h_3^{s^{(m-1)a}}h_2^{s^{(m-2)a}})\big{)}$
and $B_i=\supp\left(h_{2i}^{s^{(m-(i+1))a}}\bm \cdot h_{2i+1}^{s^{(m-(i+1))a}}\right)$  for each $i\in [2,m-1].$
A straightforward verification shows that
$$B_1\ldots B_{m-1}=\Pi_2(A_1)A_2\ldots A_{m-1}\subseteq \Pi_m(T_0).$$
Combined with \eqref{equation car Pim(T0)=m}, we have
\begin{equation}\label{equation card product leq m}
|B_1\ldots B_{m-1}|\leq m.
\end{equation}
Since $h_{2i}=x^{-a} g_{2i}\neq x^{-a} g_{2i+1}= h_{2i+1}$ and $\gcd\left(s^{(m-(i+1))a},n\right)=1$, it follows that $h_{2i}^{s^{(m-(i+1))a}}\neq h_{2i+1}^{s^{(m-(i+1))a}}$ for $i\in [1,m-1]$. This implies that \begin{equation}\label{equation |B1|geq 2}
 |B_1|\geq 2
 \end{equation}
and \begin{equation}\label{equation |Bi|geq 2}
  |B_i|=2, \mbox { for each } i\in [2,m-1].
   \end{equation}
Denote $M=\mbox{Stab}(B_1\ldots B_{m-1})$. Suppose $M\neq \{1\}$. Then $|M|> m$, this is because that $m|q-1$ and so $m<q$ for every prime divisor $q$ of $n$ (see the argument of Lemma \ref{mequiv1}).  By Lemma~\ref{Kneser}, we have that $|B_1\ldots B_{m-1}|\geq |M|> m$, which is a contradiction with \eqref{equation card product leq m}. Hence, $M=\{1\}.$ By \eqref{equation |B1|geq 2}, \eqref{equation |Bi|geq 2} and Lemma~\ref{Kneser}, we have that $|B_1\ldots B_{m-1}|\geq \sum_{i=1}^{m-1}|B_i|-(m-2)\geq 2(m-1)-(m-2)=m$. Combined with \eqref{equation card product leq m}, we have that $|B_1\ldots B_{m-1}|=m$, which implies that $|B_1|=2$, and in particular,
\begin{equation}\label{equation three equations}
h_1^{s^{(m-1)a}}h_2^{s^{(m-2)a}}=h_2^{s^{(m-1)a}}h_3^{s^{(m-2)a}}=h_3^{s^{(m-1)a}}h_1^{s^{(m-2)a}}
\end{equation}
and
\begin{equation}\label{equation three equations two}
h_1^{s^{(m-1)a}}h_3^{s^{(m-2)a}}=h_2^{s^{(m-1)a}}h_1^{s^{(m-2)a}}=h_3^{s^{(m-1)a}}h_2^{s^{(m-2)a}}.
\end{equation}
Since $h_1^{s^{(m-1)a}}h_2^{s^{(m-2)a}}=h_2^{s^{(m-1)a}}h_3^{s^{(m-2)a}}$, it follows that
$(h_1h_2^{-1})^{s^{(m-1)a}}=h_1^{s^{(m-1)a}}h_2^{-s^{(m-1)a}}=h_2^{-s^{(m-2)a}}h_3^{s^{(m-2)a}}=(h_3h_2^{-1})^{s^{(m-2)a}}$, and follows from \eqref{equation s m power=1 mod n} that
\begin{equation}\label{equation instant 1}
(h_1h_2^{-1})^{s^a}=[(h_1h_2^{-1})^{s^{(m-1)a}}]^{s^{2a}}=[(h_3h_2^{-1})^{s^{(m-2)a}}]^{s^{2a}}=h_3h_2^{-1}.
\end{equation}
Since $h_3^{s^{(m-1)a}}h_2^{s^{(m-2)a}}=h_2^{s^{(m-1)a}}h_1^{s^{(m-2)a}}$, it follows from the similar argument as above that
\begin{equation}\label{equation instant 2}
(h_3h_2^{-1})^{s^a}=h_1h_2^{-1}.
\end{equation}
By \eqref{equation instant 1} and \eqref{equation instant 2}, we conclude that
$$(h_1h_2^{-1})^{s^{2a}-1}=1.$$
If $m\nmid 2a$, then by Lemma~\ref{mequiv1}~(ii), we have that $\gcd(s^{2a}-1,n)=1$, which implies $h_1=h_2$, a contradiction. Hence, $m\mid 2a$. Since $\gcd(a,m)=1$, it follows that $$m=2$$ and $a=1$. By \eqref{equation T0=A1dotsAm-1}, we have that
\begin{equation}\label{equation last T0=A1}
T_0=A_1.
\end{equation}
By \eqref{equation s m power=1 mod n} and Lemma \ref{mequiv1} (ii), we see that $(s+1)(s-1)=s^2-1=s^m-1\equiv 0\pmod n$ and $\gcd(s-1,n)=1$, which implies $$s^a=s\equiv -1 \pmod {n}.$$  Combined with \eqref{equation three equations} and \eqref{equation three equations two}, we have that $h_1^{-1}h_2=h_2^{-1}h_3=h_3^{-1}h_1$ and $h_1^{-1}h_3=h_2^{-1}h_1=h_3^{-1}h_2$, which implies that $(h_1^{-1}h_2)^2=(h_2^{-1}h_3)(h_3^{-1}h_1)=(h_1^{-1}h_2)^{-1}$ and so $(h_1^{-1}h_2)^3=1$, and furthermore, we have that
\begin{equation}\label{equuation pi2(T0)=Hsetminus1}
\Pi_2(T_0)=\Pi_2(A_1)=B_1=\{h_1^{-1}h_2, h_2^{-1}h_1\}=\langle h_1^{-1}h_2\rangle\setminus{\{1\}}
\end{equation}
 and
\begin{equation}\label{equuation 3 subgroup}
|\langle h_1^{-1}h_2\rangle|=3.
\end{equation} Since $\langle h_1^{-1}h_2\rangle$ is a subgroup of $N$, it follows that $$3\mid n.$$

Suppose that $n\neq 3$. Then $n\geq 6$, and by Claim A, we have that $k\geq 3$ and $|W|\geq 7$. Similarly as \eqref{equation factorization of W}, we can choose a factorization of $W$ given as $W=L_0\bm\cdot L_1\bm\cdot \ldots \bm\cdot L_{k-1},$ with $|L_0|=2m-1$ and $|L_i|=m$ for $i\in [1,k-1]$, and moreover, such that $L_0$ and $T_0$ are {\sl disjoint} subsequences of $W$. By the arbitrariness of factorization given in \eqref{equation factorization of W}, we have that
\begin{equation}\label{equuation pi2(L0)=Hsetminus2}
\Pi_2(L_0)=H\setminus \{1\}
\end{equation}
 where $H$ is a subgroup of $N$ with order $|H|=3$. Since $N$ is cyclic, it follows from \eqref{equuation 3 subgroup} that $\langle h_1^{-1}h_2\rangle=H$ is the unique subgroup of $N$ of order three. Since $L_0$ and $T_0$ are  disjoint subsequences of $W$, it follows from \eqref{equuation pi2(T0)=Hsetminus1} and \eqref{equuation pi2(L0)=Hsetminus2} that $H=\Pi_2(T_0)\Pi_2(L_0)\subseteq \Pi(W)$, which is a contradiction with $S (=W)$  being product-one free. Hence,  $$n=3.$$ Then, $G\cong C_2\ltimes C_3$. It follows from  Claim A and \eqref{equation factorization of W} that $k=1$ and $S=W=T_0$. Combined with \eqref{equation form of W} and \eqref{equation last T0=A1}, we conclude that $S$ is square-free and thus, $S=x\bm\cdot xy\bm\cdot xy^2$.    This completes the proof of the theorem.                   \qed

\subsection*{Acknowledgements}
This work is supported by NSFC (grant no. 12371335),  the National Natural Science Foundation of Henan (grant no. 242300421393), the Foundation of Henan Educational Committee (grant no. 23A110012).


\normalsize

\end{document}